\numberwithin{equation}{section} \theoremstyle{plain}
\newcommand{\reqnomode}{\tagsleft@true}
\newtheorem{theorem}{Theorem}[section]
\newtheorem{lemma}[theorem]{Lemma}
\newtheorem{cor}[theorem]{Corollary}
\theoremstyle{definition}
\theoremstyle{remark}
\newtheorem{remark}[theorem]{Remark}
\numberwithin{equation}{section}
\newcommand{\loc}{\operatorname{loc}}
\newcommand{\comp}{\operatorname{comp}}
\newcommand{\End}{\operatorname{End}}
\newcommand{\RE}{\operatorname{Re}}
\newcommand\CC{\mathbb{C}}
\newcommand\RR{\mathbb{R}}
\newcommand{\mcomp}{C_{c}^{\infty}(M)}
\newcommand{\Del}{\Delta}
\renewcommand{\div}{\operatorname{div}}
\newcommand{\ve}{\varepsilon}
\newcommand{\vbn}{\mathcal{E}}
\newcommand{\vcomp}{\Gamma_{C_{c}^{\infty}}(\mathcal{E})}
\newcommand{\vep}{\varepsilon}
\begin{document}
\title[Covariant Schr\"odinger Operator]{Covariant Schr\"odinger Operator and $L^2$-Vanishing Property on Riemannian Manifolds}
\author{Ognjen Milatovic}
\address{Department of Mathematics and Statistics\\
         University of North Florida\\
       Jacksonville, FL 32224 \\
        USA}
\email{omilatov@unf.edu}

\subjclass[2010]{53C21, 53C24, 58J05, 58J60}

\keywords{Covariant Schr\"odinger Operator, Dirac Operator, Harmonic Form, $L^2$-Vanishing Property, Riemannian Manifold, Weighted Poincar\'e Inequality}

\begin{abstract} Let $M$ be a complete Riemannian manifold satisfying a weighted Poincar\'e inequality, and let $\mathcal{E}$ be a Hermitian vector bundle over $M$ equipped with a metric covariant derivative $\nabla$. We consider the operator
$H_{X,V}=\nabla^{\dagger}\nabla+\nabla_{X}+ V$, where $\nabla^{\dagger}$ is the formal adjoint of $\nabla$ with respect to the inner product in the space of square-integrable sections of $\mathcal{E}$, $X$ is a smooth (real) vector field on $M$, and $V$ is a fiberwise self-adjoint, smooth section of the endomorphism bundle $\End\vbn$. We give a sufficient condition for the triviality of the $L^2$-kernel of $H_{X,V}$. As a corollary, putting $X\equiv 0$ and working in the setting of a Clifford module equipped with a Clifford connection $\nabla$, we obtain the triviality of the $L^2$-kernel of $D^2$, where $D$ is the Dirac operator corresponding to $\nabla$. In particular, when $\vbn=\Lambda_{\CC}^{k}T^*M$ and $D^2$ is the Hodge--deRham Laplacian on (complex-valued) $k$-forms, we recover some recent vanishing results for $L^2$-harmonic (complex-valued) $k$-forms.
\end{abstract}

\maketitle

\section{Introduction}
For many years mathematicians have studied the triviality property of the space $\mathscr{K}_{\Delta}$ of $L^2$-harmonic $k$-forms on complete Riemannian manifolds without boundary,
\[
\mathscr{K}_{\Delta}:=\{\omega \in L^2\colon\Delta \omega=0\},
\]
where $\Delta:=d\delta+\delta d$ is the Hodge--deRham Laplacian acting $k$-forms (here, $d$ and $\delta$ are the standard differential and codifferential).

Topological significance of $\mathscr{K}_{\Delta}=\{0\}$ on a compact Riemannian manifold $M$ is clear if we remember that the space $\mathscr{K}_{\Delta}$ is isomorphic to the $k$-th de Rham cohomology group of $M$. While this isomorphism is generally not present in the setting of a non-compact Riemannian manifold $M$, it turns out that the triviality of $\mathscr{K}_{\Delta}$  may still offer some topological insights: for example, the authors of~\cite{Li-Tam-92} showed that if $M$ has no parabolic ends and $\mathscr{K}_{\Delta}=\{0\}$, where $\Delta$ is Hodge--deRham Laplacian acting on $1$-forms, then $M$ is connected at infinity.

Some forty years ago, the author of~\cite{Dod-81} introduced an elegant method for tackling the problem of triviality of $\mathscr{K}_{\Delta}$ pertaining to $k$-forms on a complete Riemannian manifold $M$. Using Lichnerowicz--Weitzenb\"ock formula (see~(\ref{E:Weitzenbock}) below) and a suitable sequence of cut-off functions (whose existence is guaranteed by the completeness of $M$; see section~\ref{SS:cut-off} below for details), the author of~\cite{Dod-81} showed, among other things, that if the volume of $M$ is infinite and the Weitzenb\"ock curvature operator $\mathscr{R}^{W}$ is non-negative definite, then $\mathscr{K}_{\Delta}=\{0\}$, generalizing an earlier result of~\cite{Yau-76} pertaining to $1$-forms.

In subsequent years, a number of authors have refined the integration by parts technique from~\cite{Dod-81}, aiming to accommodate various assumptions on $M$ and $\mathscr{R}^{W}$ (in the case of $1$-forms, $\mathscr{R}^{W}$ reduces to Ricci tensor $\textrm{Ric}_{M}$). About twenty years ago, in the context of $1$-forms, the authors of~\cite{Li-Wang-01}, showed that if $M$ (with $\dim M=n$) satisfies $\lambda_1(M)>0$ and $\textrm{Ric}_{M}\geq-\frac{n\lambda_1(M)}{n-1}+\varepsilon$, for some $\varepsilon>0$, then $\mathscr{K}_{\Delta}=\{0\}$ (see~(\ref{E:first-ev}) below for the definition of the bottom of the spectrum $\lambda_1(M)$ of the scalar Laplacian $\Del_{M}$). Later, this result was generalized in~\cite{Lam-10} to manifolds satisfying Poincar\'e inequality with a (continuous) weight $\rho\geq 0$:
\begin{equation}\label{E:poincare-intro}
\int_{M}\rho(x)|f(x)|^2\,d\nu_{g}(x)\leq \int_{M}|df(x)|^2d\nu_{g}(x),
\end{equation}
for all $f\in\mcomp$, where $\mcomp$ denotes smooth compactly supported functions on $M$ and $d\nu_{g}$ is the volume element on $M$ induced by the metric $g$. The author of~\cite{Lam-10} imposed a certain condition on the growth of $\rho$ and the following condition on the Ricci tensor: $\textrm{Ric}_{M}\geq-\frac{n\rho}{n-1}+\varepsilon$, for some $\varepsilon>0$.

Subsequent to~\cite{Lam-10}, the author of~\cite{Vieira-16} proved (see theorem 5 there) that $\mathscr{K}_{\Delta}=\{0\}$ for $k$-forms under the following assumptions: $M$ satisfies~(\ref{E:poincare-intro}) (without growth-rate or sign restrictions on $\rho$), $M$ has infinite volume or $\rho$ is not identically equal to $0$, and Weitzenb\"ock curvature operator satisfies $\mathscr{R}^{W}\geq -a \rho$, where $a\in [0,a_0)$ is a constant (here, the constant $a_0$ comes from the refined Kato inequality for $k$-forms). A related vanishing result for $L^{q}$-harmonic $(0,k)$-tensors with $q\geq 2$ (here, ``harmonic" is meant with respect to the Lichnerowicz Laplacian) was established by the authors of~\cite{GHKC-21} under the following assumptions: $M$ satisfies~(\ref{E:poincare-intro}), $M$ is non-parabolic,  $\displaystyle\liminf_{x\to\infty}\rho(x)>0$, and the curvature condition $\mathscr{C}\geq -a \rho$, where $a\in [0,a_0)$, with $a_0$ depending (among other things) on $q$. (Here, $\mathscr{C}$ is a suitable curvature operator; see section 2 in~\cite{GHKC-21} for details.) The paper~\cite{GHKC-21} (see also~\cite{GHKC-22-kah} for the K\"ahler manifold setting) gives a number of vanishing results (for $(0,k)$-tensors and $k$-forms) in which the requirement $\mathscr{C}\geq -a \rho$ is replaced by more explicit conditions involving eigenvalues of $\mathscr{C}$.

By performing a careful analysis of the Weitzenb\"ock curvature operator, the author of~\cite{Lin-19} established two types of vanishing results for $k$-forms: (i) theorems based on integral-type assumptions on the Weyl curvature tensor $W$, traceless Ricci tensor $E$, and scalar curvature, and (ii) theorems based on the assumption~(\ref{E:poincare-intro}) and pointwise assumptions on $W$ and $E$.

Over the last fifteen years, some authors have studied vanishing property assuming weighted Poincar\'e inequality for $k$-forms (with continuous weight $\rho\geq 0$):
\begin{equation}\label{E:poincare-intro-form}
\int_{M}\rho(x)|\omega(x)|^2\,d\nu_{g}(x)\leq \int_{M}(|d\omega(x)|^2+|\delta\omega(x)|^2)\,d\nu_{g}(x),
\end{equation}
for all smooth compactly supported $k$-forms $\omega$.

Assuming~(\ref{E:poincare-intro-form}) with some growth restrictions on $\rho$, the vanishing property for harmonic $k$-forms was established in~\cite{Chen-Sung-09} and, subsequently, in~\cite{Dung-Sung-14}.  Recently, the author of~\cite{Zhou-20} proved (see theorem 1.4 there) that $\mathscr{K}_{\Delta}=\{0\}$ under the following assumptions: $M$ satisfies~(\ref{E:poincare-intro-form}), $\rho$ is not identically equal to $0$, and Weitzenb\"ock curvature operator satisfies $\mathscr{R}^{W}\geq -a \rho$,  where $a\geq 0$ is a constant. Additionally, the authors of~\cite{Duc-Dang-21} established several vanishing theorems for $k$-forms assuming~(\ref{E:poincare-intro-form}) together with pointwise conditions on Weyl conformal curvature tensor and traceless Ricci tensor.

As can be seen from the preceding paragraphs, in recent years there has been quite a bit of activity on the $L^2$-vanishing property for harmonic $k$-forms. For the corresponding studies in the setting of $p$-harmonic $k$-forms, we refer the reader to~\cite{Chao-22,Dung-Dung-N-23,Dung-17,Dung-22} and references therein.  (Here, a $k$-form $\omega$ is $p$-harmonic, $p>1$, if $d\omega=0$ and $\delta(|\omega|^{p-2}\omega)=0$.) For $L^{q}_{f}$-vanishing results (in some papers $q=2$) in the context of $1$-forms on smooth metric-measure spaces (Riemannian manifolds $(M,g)$ with metric $g$ and measure $e^{-f}\,d\nu_{g}$, where $f$ is a smooth function on $M$ and $d\nu_{g}$ is the volume measure induced by the metric $g$), see~\cite{Chao-23,Han-Lin-17,Vieira-13,Zhou-21,Zhou-lp} and references therein.

Before describing the results of our article, we note the paper~\cite{C-99}, which is situated in the setting of a Hermitian vector bundle $\vbn$ (over a complete Riemannian manifold $M$), equipped with a metric covariant derivative $\nabla$; see section~\ref{SS:ornst-uhl-def} below for details. Denoting by $\Gamma_{L^2}(\vbn)$ the square integrable sections and by $\nabla^{\dagger}$ the formal adjoint of $\nabla$ (with respect to the inner product in $\Gamma_{L^2}(\vbn)$), the author of~\cite{C-99} considered the \emph{covariant Schr\"odinger operator}
\[
H_{V}=\nabla^{\dagger}\nabla+V,
\]
where  $V$ is a fiberwise self-adjoint, smooth section of the endomorphism bundle $\End\vbn$.
Let us the denote the $L^2$-kernel of $H_{V}$ by
\begin{equation}\label{E:kern-h-v}
\mathscr{K}_{H_{V}}:=\{u\in \Gamma_{L^2}(\vbn)\colon H_{V}u=0\},
\end{equation}
where  $H_{V}u=0$ is understood in distributional sense (as in~(\ref{E:KER-H-V}) below).

In the paper~\cite{C-99} the author observed that in the case $\vbn=\Lambda_{\CC}^{k}T^*M$ (the complexified version of $\Lambda^{k}T^*M$),  Lichnerowicz--Weitzenb\"ock formula (see~(\ref{E:Weitzenbock}) below) leads to the following equality: $\mathscr{K}_{\Delta}=\mathscr{K}_{H_{V}}$, where $V=\mathscr{R}^{W}$ and $\Delta$ is the Hodge--deRham Laplacian on $k$-forms (here, $\mathscr{R}^{W}$ is the Weitzenb\"ock curvature operator). Thus, establishing the $L^2$-vanishing property for $k$-forms amounts to proving that $\mathscr{K}_{H_{V}}=\{0\}$. In particular, the author of~\cite{C-99} showed that $\mathscr{K}_{H_{V}}=\{0\}$ provided that $M$ satisfies a Sobolev $p$-type inequality with $p>2$, and that $|V_{-}|$ satisfies a certain integral-type condition (here, $V_{-}$ is the negative part of $V$). In the recent years, the operator $H_{V}$ has been studied extensively; see the book~\cite{Gue-book}.

In our article we consider the operator
\[
H_{X,V}=\nabla^{\dagger}\nabla+\nabla_{X}+ V.
\]
Here, $\nabla$ and $\nabla^{\dagger}$ are as in the preceding paragraph, $X$ is a real, smooth (generally unbounded) vector field on $M$, and $V$ is a fiberwise self-adjoint, smooth section of the endomorphism bundle $\End\vbn$. We define $\mathscr{K}_{H_{X,V}}$ as in~(\ref{E:kern-h-v}) with $H_{X,V}$ in place of $H_{V}$.

In theorem~\ref{T:main-1} we prove that $\mathscr{K}_{H_{X,V}}=\{0\}$ under the following assumptions: $M$ satisfies~(\ref{E:poincare-intro}), $M$ has infinite volume or $\rho$ is not identically equal to zero, $|X|\leq\hat{a}\sqrt{\rho}$ and $V-\div X\geq -a\rho$, with constants $0\leq \hat{a}<1$ and $0\leq a<1-\hat{a}$ (here, $\div X$ is the divergence of $X$). As a corollary, putting $X\equiv 0$ and working in the setting of a Clifford module $\vbn$ equipped with a Clifford connection $\nabla$ (see section~\ref{SS:cb} below for details) we get $\mathscr{K}_{D^2}=\{0\}$, where $D$ is the Dirac operator corresponding to $\nabla$ and $\mathscr{K}_{D^2}$ is the $L^2$-kernel of $D^2$. In particular, when $\vbn=\Lambda_{\CC}^{k}T^*M$ (complexified version of $\Lambda^{k}T^*M$) and $D^2$ is the Hodge--deRham Laplacian on (complex-valued) $k$-forms, we recover theorem 5 of~\cite{Vieira-16} with $\rho\geq 0$.

In theorem~\ref{T:main-2} we accomplish the same goal as in theorem~\ref{T:main-1}, with the following hypotheses on $X$ and $V$: $|X|\leq\hat{a}\sqrt{\rho}$ with $0\leq \hat{a}<1$, $V-\div X\geq -a\rho-b$ with $0\leq a<1-\hat{a}$ and $b\geq 0$, and the condition $\lambda_1(M)>b/(1-a-\hat{a})$ (see~(\ref{E:first-ev}) for the definition of the bottom of the spectrum $\lambda_1(M)$ of the scalar Laplacian $\Del_{M}$). As a corollary, putting $X\equiv 0$ and specializing to (complex-valued) $k$-forms, we recover theorem 6 of~\cite{Vieira-16} with $\rho\geq 0$.

In theorems~\ref{T:main-3} and~\ref{T:main-4} we work in  the setting of a Clifford module $\vbn$ (over a complete Riemannian manifold $M$) equipped with a Clifford connection $\nabla$. We extend some vanishing results of~\cite{Zhou-20} to $D^2$, the square of the Dirac operator $D$ corresponding to $\nabla$, assuming weighted Poincar\'e inequality for $D$ (see~(\ref{E:poincare-2}) for precise formulation) and curvature conditions analogous to those in~\cite{Zhou-20}.

The paper is organized into five sections. After describing the notations, operators, and function spaces in section~\ref{S:res}, we state the main results in section~\ref{S:state-results}. The proofs of the main results are carried out in sections~\ref{S:pf-1-2} and~\ref{S:pf-3-4}.

\section{Description of Notations, Function Spaces, and Operators}\label{S:res}
\subsection{Basic Notations}\label{SS:s-2-1} In this paper we work in the setting of a connected Riemannian $n$-manifold $(M,g)$ without boundary. The symbol $d\nu_{g}$ denotes the volume measure on $M$: in local coordinates $x^1,x^2,\dots, x^n$, we have $d\nu_{g}=\sqrt{\det (g_{ij})}\,dx$, where $dx=dx^1\,dx^2\dots dx^n$ is the Lebesgue measure.

We use the notations $TM$, $T^*M$ for tangent and cotangent bundles of $M$ respectively. Additionally, $\Lambda^{k}T^*M$ stands for the $k$-th exterior power of the cotangent bundle $T^*M$, and $\Lambda_{\CC}^{k}T^*M$ indicates the complexified version of $\Lambda^{k}T^*M$. 


Throughout the paper, $\vbn \to M$ is a smooth Hermitian vector bundle over $M$ equipped with a Hermitian structure $\langle\cdot, \cdot\rangle$, linear in the first and antilinear in the second variable. We use $|\cdot|_{x}$ to indicate the fiberwise norms on $\vbn_{x}$, usually writing just $|\cdot|$ to simplify the notations. The symbols $\Gamma_{C^{\infty}}(\vbn)$ and $\vcomp$ denote smooth sections of $\vbn$ and smooth compactly supported sections of $\vbn$, respectively. In particular, for smooth complex-valued $k$-forms on $M$ we use the symbol $\Gamma_{C^{\infty}}(\Lambda_{\CC}^{k}T^*M)$, and for their compactly supported analogues, the symbol $\Gamma_{C_{c}^{\infty}}(\Lambda_{\CC}^{k}T^*M)$. When talking about complex-valued functions on $M$, the corresponding spaces will be indicated by $C^{\infty}(M)$ and $\mcomp$. Additionally, $C(M)$ denotes continuous (complex-valued) functions on $M$.

We will also use basic ``musical" isomorphisms coming from $g$: for a vector field $Y$ on $M$, the symbol $Y^{\flat}$ indicates the one-form associated to $Y$, while $\omega^{\sharp}$ refers to the vector field associated to the one-form $\omega$.

Lastly, we recall that the Levi--Civita connection $\nabla^{LC}$ has a unique metric extension to $\Lambda^{k}T^*M$, which we also denote by $\nabla^{LC}$.

\subsection{Description of $L^p$-spaces} For $1\leq p<\infty$, the notation $\Gamma_{L^p}(\vbn)$ refers to the space of $p$-integrable sections of $\vbn$ with the norm
\begin{equation}\label{E:lp-norm}
\|u\|^{p}_{p}:=\int_{M}|u(x)|_{x}^p\,d\nu_{g}(x),
\end{equation}
where $|\cdot|_{x}$ is the fiberwise norm in $\vbn_{x}$.


In the case $p=2$ we get a Hilbert space $\Gamma_{L^2}(\vbn)$ with the inner product

\begin{equation}\label{E:inner-mu}
(u,v)=\int_{M}\langle u(x),v(x)\rangle_{x}\,d\nu_{g}(x).
\end{equation}

For simplicity, we often drop the subscript $x$ from $\langle\cdot,\cdot\rangle_{x}$ and $|\cdot|_{x}$ and simply write $\langle\cdot,\cdot\rangle$ and $|\cdot|$.

For the $L^p$-space of (complex-valued) functions on $M$ we use the symbol $L^p(M)$, and in the formulas~(\ref{E:lp-norm}) and~(\ref{E:inner-mu}) we replace the fiberwise norm by the absolute value, and $\langle u(x),v(x)\rangle_{x}$ by $u(x)\overline{v(x)}$, where $\overline{z}$ indicates the conjugate of a complex number $z$.

\subsection{Covariant Schr\"odinger Operator}\label{SS:ornst-uhl-def}
With the basic function spaces in place, we turn to differential operators.
The first operator is $\nabla\colon \Gamma_{C^{\infty}}(\vbn)\to \Gamma_{C^{\infty}}(T^*M\otimes\vbn)$, a (smooth) metric covariant derivative on $\vbn$. Next, we have $\nabla^{\dagger}\colon \Gamma_{C^{\infty}}(T^*M\otimes\vbn)\to \Gamma_{C^{\infty}}(\vbn)$, the formal adjoint of $\nabla$ with respect to $(\cdot,\cdot)$, the inner product~(\ref{E:inner-mu}). Composing the latter two operators produces the so-called \emph{Bochner Laplacian} $\nabla^{\dagger}\nabla$. In the case of functions, we have the differential $d\colon C^{\infty}(M)\to \Gamma_{C^{\infty}}(\Lambda_{\CC}^{1}T^*M)$ and its formal adjoint $d^{\dagger}\colon \Gamma_{C^{\infty}}(\Lambda_{\CC}^{1}T^*M)\to C^{\infty}(M)$, understood with respect to the inner product $(\cdot,\cdot)$ in $L^2(M)$. The composition $d^{\dagger}d$, denoted by $\Delta_{M}$, is known as \emph{the scalar Laplacian} on $M$. We note that in our article $\nabla^{\dagger}\nabla$ and $\Delta_{M}$ are non-negative operators.

For a smooth vector field $Y$, we define the divergence of $Y$ as
\begin{equation}\label{E:div-def}
\div Y:=-d^{\dagger}(Y^{\flat}).
\end{equation}

Let $V\in \Gamma_{C^{\infty}}(\End \vbn)$ such that $V(x)\colon\vbn_x\to\vbn_x$ is a self-adjoint operator for all $x\in M$, and let $X$ be a smooth, real vector field on $M$.  We consider the \emph{covariant Schr\"odinger differential expression with potential $V$ and drift $X$}:
\begin{equation}\label{E:def-H}
H_{X,V}u:=\nabla^{\dagger}\nabla u+\nabla_{X}u+Vu.
\end{equation}
To make our terminology simpler, we refer to $H_{X,V}$ as \emph{covariant Schr\"odinger operator (with potential $V$ and drift $X$)}, understanding that the word ``operator" is used loosely.

\subsection{The Space $\mathscr{K}_{H_{X,V}}$}\label{SS:min-max-rel-1}
We define
\begin{equation}\label{E:KER-H-V}
\mathscr{K}_{H_{X,V}}:=\{u\in \Gamma_{L^2}(\vbn)\colon H_{X,V}u=0\},
\end{equation}
where $ H_{X,V}=0$ is understood in distributional sense, that is,
\[
(u, (H_{X,V})^{\dagger}v)=0,
\]
for all $v\in \Gamma_{C_{c}^{\infty}}(\vbn)$.

Since $V$ and $X$ are smooth and since $H_{X,V}$ is an elliptic operator, it follows (by local elliptic regularity) that $\mathscr{K}_{H_{X,V}}\subseteq \Gamma_{L^2}(\vbn)\cap \Gamma_{C^{\infty}}(\vbn)$. Thus, $\mathscr{K}_{H_{X,V}}$ is independent of any extension of $(H_{X,V})|_{\Gamma_{C_{c}^{\infty}(\vbn)}}$ to a closed operator in $\Gamma_{L^2}(\vbn)$.


\section{Statements of Results}\label{S:state-results}

In the first two theorems we assume that $M$ satisfies a weighted Poincar\'e inequality, which we describe as follows:

\subsection{Hypothesis (P1)}\label{assump_V_X_etc} Let $\rho\colon M\to \RR$. Assume that
\begin{itemize}
\item [(P1a)] $\rho$ is a continuous function such that $\rho(x)\geq 0$ for all $x\in M$;
\item [(P1b)] for all $f\in\mcomp$ we have
\begin{equation}\label{E:poincare-1}
\int_{M}\rho(x)|f(x)|^2\,d\nu_{g}(x)\leq \int_{M}|df(x)|^2d\nu_{g}(x),
\end{equation}
where $|\cdot|$ on the right hand side is the fiberwise norm in $T_{x}^*M$.
\end{itemize}

We are ready to state the first result.

\begin{theorem} \label{T:main-1} Assume that $M$ is a geodesically complete Riemannian manifold without boundary. Assume that $M$ satisfies the hypothesis (P1). Additionally, assume that one of the following two conditions is satisfied:
\begin{enumerate}
  \item [(m1)] $\rho(x)$ is not identically equal to $0$;
  \item [(m2)] the volume $\textrm{vol}(M)$ is infinite.
\end{enumerate}
Let $\vbn$ be a Hermitian vector bundle over $M$ equipped with a metric covariant derivative $\nabla$. Let $X$ be a smooth, real vector field on $M$ such that
\begin{equation}\label{E:hyp-X}
|X(x)|\leq \hat{a}\sqrt{\rho(x)},
\end{equation}
for all $x\in M$, where $0\leq \hat{a}<1$ is a constant, and $|\cdot|$ is the norm in $T_{x}M$.

Let $V\in \Gamma_{C^{\infty}}(\End \vbn)$ be a fiberwise self-adjoint endomorphism such that
\begin{equation}\label{E:v-rho}
V(x)-(\div X)I_{x}\geq -a\rho(x)I_{x},
\end{equation}
for all $x\in M$, where $0\leq a<1-\hat{a}$ is a constant, with $\hat{a}$ as in~(\ref{E:hyp-X}). (Here, $\div X$ is as in~(\ref{E:div-def}), and $I_{x}\colon \vbn_{x}\to \vbn_{x}$ is the identity endomorphism. The inequality~(\ref{E:v-rho}) is understood in quadratic-form sense in $\vbn_{x}$.)

Then, the set $\mathscr{K}_{H_{X,V}}$ from~(\ref{E:KER-H-V}) has the following property: $\mathscr{K}_{H_{X,V}}=\{0\}$.
\end{theorem}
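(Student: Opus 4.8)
The plan is to follow the integration-by-parts strategy of Dodziuk, adapted to the weighted Poincaré setting and to the drift term. Let $u\in\mathscr{K}_{H_{X,V}}$; by local elliptic regularity $u$ is smooth. First I would fix the standard cut-off functions $\phi_k\in\mcomp$ coming from geodesic completeness: $0\le\phi_k\le1$, $\phi_k\equiv1$ on a ball of radius $k$, and $|d\phi_k|\le 1/k$ (or similar), so that $\phi_k\uparrow1$ and $\phi_k|du|\to0$ in the appropriate sense. The core computation is to pair $H_{X,V}u=0$ against $\phi_k^2 u$ in $\Gamma_{L^2}(\vbn)$ and integrate by parts. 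The Bochner term yields $\int\langle\nabla^\dagger\nabla u,\phi_k^2 u\rangle=\int|\phi_k\nabla u|^2+\int\langle\nabla u, 2\phi_k\, d\phi_k\otimes u\rangle$; the drift term $\int\langle\nabla_X u,\phi_k^2 u\rangle$ I would rewrite, using $2\RE\langle\nabla_X u,u\rangle = X(|u|^2)$ and the definition~(\ref{E:div-def}) of $\div X$, as $-\tfrac12\int (\div X)\phi_k^2|u|^2-\int \phi_k\, d\phi_k(X)\,|u|^2$ (taking real parts throughout, which is legitimate since $V$ is fiberwise self-adjoint). Combining with the $V$-term gives
\begin{equation}\label{E:main-ibp}
\int\phi_k^2|\nabla u|^2+\int\phi_k^2\Big\langle\big(V-\tfrac12(\div X)I\big)u,u\Big\rangle
= -2\RE\!\int \phi_k\,\langle\nabla u,\,d\phi_k\otimes u\rangle-\int \phi_k\, d\phi_k(X)|u|^2 .
\end{equation}
Wait — the hypothesis~(\ref{E:v-rho}) involves $V-(\div X)I$, not $V-\tfrac12(\div X)I$, so I would instead integrate the drift term by moving the \emph{whole} derivative onto $\phi_k^2 u$; that is, write $\int\langle\nabla_X u,\phi_k^2u\rangle = -\int\langle u,\nabla_X(\phi_k^2u)\rangle-\int(\div X)\phi_k^2|u|^2$ and then solve for the symmetric part, so that the effective zeroth-order term is exactly $V-(\div X)I$ matched against $\phi_k^2|u|^2$, with an error $\int\phi_k d\phi_k(X)|u|^2$.

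Next I would estimate the right-hand side. The Bochner cross-term is bounded, by Cauchy–Schwarz and Young's inequality with a parameter $\eta>0$, by $\eta\int\phi_k^2|\nabla u|^2+\eta^{-1}\int|d\phi_k|^2|u|^2$; the drift cross-term, using $|X|\le\hat a\sqrt\rho$, is bounded by $\hat a\int\phi_k\sqrt\rho\,|d\phi_k|\,|u|^2$, which a further Young step turns into $\tfrac{\hat a}{2}\int\phi_k^2\rho|u|^2+\tfrac{\hat a}{2}\int|d\phi_k|^2|u|^2$. Apply the curvature hypothesis~(\ref{E:v-rho}) to the left side: $\int\phi_k^2\langle(V-(\div X)I)u,u\rangle\ge -a\int\phi_k^2\rho|u|^2$. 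Now the crucial point: the weighted Poincaré inequality~(\ref{E:poincare-1}) applied to $f=\phi_k|u|$ gives $\int\rho\,\phi_k^2|u|^2\le\int|d(\phi_k|u|)|^2\le (1+\delta)\int\phi_k^2|d|u||^2+(1+\delta^{-1})\int|d\phi_k|^2|u|^2$, and Kato's inequality $|d|u||\le|\nabla u|$ controls the first piece by $\int\phi_k^2|\nabla u|^2$. Assembling all of this, the $\int\phi_k^2|\nabla u|^2$ terms and the $\int\phi_k^2\rho|u|^2$ terms each appear on both sides; choosing $\eta,\delta$ small and using $a<1-\hat a$ (so that the coefficient $1-a-\hat a+o(1)$ stays positive) lets me absorb them, leaving
\[
(1-a-\hat a-o(1))\int\rho\,\phi_k^2|u|^2 + c\int\phi_k^2|\nabla u|^2 \;\le\; C\int|d\phi_k|^2|u|^2
\]
for constants $c,C>0$ independent of $k$. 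Since $u\in\Gamma_{L^2}(\vbn)$ and $|d\phi_k|\le 1/k$ with support in an annulus, the right side $\to0$ as $k\to\infty$. Letting $k\to\infty$ and using monotone/dominated convergence yields $\nabla u\equiv0$ and $\int_M\rho|u|^2=0$.

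Finally I would close the argument using the dichotomy (m1)/(m2). From $\nabla u=0$ we get $|u|$ constant (since $d|u|=0$ via Kato once more, or directly since $\nabla$ is metric). If $\mathrm{vol}(M)=\infty$, a nonzero constant $|u|$ contradicts $u\in L^2$, so $u\equiv0$. If instead $\rho\not\equiv0$, pick $x_0$ with $\rho(x_0)>0$; continuity of $\rho$ gives $\rho>0$ on a nonempty open set, and then $\int_M\rho|u|^2=0$ with $|u|$ constant forces $|u|\equiv0$, hence $u\equiv0$. The main obstacle I anticipate is purely bookkeeping: keeping careful track of the three Young-inequality parameters so that, after invoking the weighted Poincaré inequality and Kato's inequality, the gradient and $\rho$-weighted terms can be simultaneously absorbed — this is exactly where the sharp constraint $a<1-\hat a$ must be used, and where an imprecise split would give a strictly weaker range of $(a,\hat a)$.
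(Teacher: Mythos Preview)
Your overall strategy---cut-offs, integration by parts, Young's inequality, the weighted Poincar\'e inequality applied to $\phi_k|u|$, and Kato's inequality---is exactly the engine that drives the paper's proof. The final dichotomy argument (parallel section forces $|u|$ constant, then (m1)/(m2) kills it) is also the same. So the architecture is right.

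There is, however, a genuine gap in your handling of the drift term. You correctly noticed that the naive pairing yields $V-\tfrac12(\div X)I$ rather than $V-(\div X)I$, but your proposed repair does not work: no matter how you integrate $\int\langle\nabla_X u,\phi_k^2 u\rangle$ by parts and ``solve for the symmetric part,'' the real part always produces $-\tfrac12\int(\div X)\phi_k^2|u|^2-\int\phi_k(X\phi_k)|u|^2$, never the full $-\div X$. What actually happens is this: after invoking the hypothesis $\langle Vu,u\rangle\ge(\div X-a\rho)|u|^2$ you are left with a residual term $\tfrac12\int(\div X)\phi_k^2|u|^2$, which a further integration by parts converts into $-\int\phi_k(X\phi_k)|u|^2-\int\phi_k^2|u|\,(X|u|)$. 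The second of these, $\int\phi_k^2|u|\,(X|u|)$, is a genuine first-order drift term that does \emph{not} vanish with the cut-off; it must be estimated via $|X|\le\hat a\sqrt\rho$ and $|X|u||\le\hat a\sqrt\rho\,|d|u||\le\hat a\sqrt\rho\,|\nabla u|$, contributing $\tfrac{\hat a}{2}$ both to the $\rho$-weighted term and \emph{directly} to the gradient term. This is precisely the term that forces the sharp constraint $a<1-\hat a$; with your bookkeeping as written (only the cut-off error $\int\phi_k\,d\phi_k(X)|u|^2$), the absorption would go through already for $a<1-\tfrac{\hat a}{2}$, which should signal that a term has been dropped.

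The paper sidesteps this bookkeeping entirely by first passing to the scalar function $h=|u|$ via the Bochner identity $|u|\Delta_M|u|=\RE\langle\nabla^\dagger\nabla u,u\rangle-|\nabla u|^2+|d|u||^2$ and the pointwise relation $\RE\langle\nabla_X u,u\rangle=|u|\,X|u|$, obtaining the clean differential inequality $h\Delta_M h\le -(Xh)h-(\div X)h^2+a\rho h^2$ before any integration. All of the cut-off work, Young estimates, and the Poincar\'e step are then carried out at the scalar level (Lemma~\ref{L:V-DRIFT}), where the role of the first-order term $(Xh)h$ is transparent. Your direct bundle-level pairing is a legitimate alternative route, but it requires exactly the extra care above to recover the correct range of $(a,\hat a)$.
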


As we will describe below, theorem~\ref{T:main-1}, in conjunction with Lichnerowicz--Weitzenb\"ock formula, leads to a vanishing result for the kernel of the square of the Dirac operator.

\subsection{Clifford Module}\label{SS:cb} In our description of the Clifford module we follow the conventions of chapter 10 in~\cite{Taylor}.
By a \emph{Clifford module} we mean a Hermitian vector bundle $\vbn$ over $M$ satisfying the following two properties:

\begin{enumerate}
  \item [(i)] each fiber $\vbn_x$ is a module over the Clifford algebra $C\ell(T^{*}_xM, g^{*}_{x})$, where $g^{*}_{x}$ is the co-metric on $T^{*}_xM$,
and
\[
\langle \xi \bullet u, v\rangle \ = \ \langle  u, \xi \bullet
v\rangle, \quad\text{for all }\xi\in T^{*}_xM\text{ with }|\xi|=1,\text{ and all }u,
v\in \vbn_x,
\]
where $\langle\cdot,\cdot\rangle$ is the fiberwise
inner product in $\vbn_x$ and ``$\bullet$" is the Clifford action.

  \item [(ii)] $\vbn$ is endowed with a metric
connection $\nabla$ satisfying the property
\[
\nabla_{X}(\omega\bullet s) \ = \ (\nabla^{LC}_{X}\omega)\bullet s +\omega\bullet
(\nabla_{X}s),
\]
for all $s\in \Gamma_{C^{\infty}}(\vbn)$, $\omega\in \Gamma_{C^{\infty}}(\Lambda^{1}T^*M)$, and $X\in \Gamma_{C^{\infty}}(TM)$, where
``$\bullet$" is the Clifford action and $\nabla^{LC}$ is the covariant derivative on $\Lambda^{1}T^*M$ induced from the Levi--Civita connection.
\end{enumerate}

The composition
\[
\Gamma_{C^{\infty}}(\vbn) \stackrel{\nabla} \longrightarrow
\Gamma_{C^{\infty}}(T^*M\otimes \vbn)\stackrel{\bullet} \longrightarrow
\Gamma_{C^{\infty}}(\vbn)\stackrel{i}\longrightarrow\Gamma_{C^{\infty}}(\vbn),
\]
where $i$ on the last arrow indicates the multiplication by $i=\sqrt{-1}$,
defines a first-order differential operator
\begin{equation}\label{E:Dirac}
D\colon\Gamma_{C^{\infty}}(\vbn)\to \Gamma_{C^{\infty}}(\vbn),
\end{equation}
called the \emph{Dirac operator} corresponding to the
Clifford module $(\vbn,\nabla)$.

\begin{remark}
We remind the reader that the authors of~\cite{lm-spin-book} use the convention $v \bullet v=-1\cdot q(v)$ for the Clifford algebra $C\ell(V,q)$ over a quadratic vector space $(V,q)$. Moreover, in their definition of the Clifford module, the authors of~\cite{lm-spin-book} use $C\ell(T_{x}M,g_{x})$ instead of $C\ell(T^{*}_{x}M,g^{*}_{x})$. As we follow the conventions from~\cite{Taylor}, the definitions of the Clifford module and the corresponding Dirac operator (see section~\ref{SS:cb} above) and certain formulas (for instance, the formulas~(\ref{E:Weitzenbock}),~(\ref{E:bw-curvature}) and~(\ref{E:ml-product}) below) look slightly different from their analogues in~\cite{lm-spin-book}.
\end{remark}

\begin{remark}\label{RR:lawson-spin} Since $M$ is geodesically complete, by theorem II.5.4 in~\cite{lm-spin-book}, it follows that $D|_{\vcomp}$ is an essentially self-adjoint operator in $\Gamma_{L^2}(\vbn)$ whose self-adjoint closure in $\Gamma_{L^2}(\vbn)$ we denote (again)
by $D$.

Defining
\begin{equation}\label{E:KER-D-squared}
\mathscr{K}_{D^2}:=\{u\in \Gamma_{L^2}(\vbn)\colon D^2u=0\}
\end{equation}
and using local elliptic regularity we see that $\mathscr{K}_{D^2}\subseteq \Gamma_{L^2}(\vbn)\cap \Gamma_{C^{\infty}}(\vbn)$.

Referring again to theorem II.5.4 in~\cite{lm-spin-book}, we have
\begin{equation}\label{E:KER-D-squared-1}
\mathscr{K}_{D^2}=\mathscr{K}_{D},
\end{equation}
where
\[
\mathscr{K}_{D}:=\{u\in \Gamma_{L^2}(\vbn)\colon Du=0\}
\]
\end{remark}

Before stating a corollary of theorem~\ref{T:main-1}, we recall a formula linking the Bochner Laplacian on a Clifford module $\vbn$ with the square $D^2$ of the corresponding Dirac operator $D$.

\subsection{Lichnerowicz--Weitzenb\"ock formula} In the setting of a Clifford module $(\vbn,\nabla)$ and the corresponding Dirac operator $D$, we have (see proposition 10.4.1 in~\cite{Taylor}):
\begin{equation}\label{E:Weitzenbock}
D^2u \ = \ \nabla^{\dagger}\nabla u+\mathscr{R}^{W}u,
\end{equation}
and $\mathscr{R}^{W}\in \Gamma_{C^{\infty}}(\End \vbn)$ is a fiberwise self-adjoint endomorphism.

More explicitly (see the formula (10.4.15) in~\cite{Taylor}), if $\{e_j\}_{j=1}^{n}$ is a local orthonormal frame field and $\{v_j\}_{j=1}^{n}$ is the corresponding dual frame (here, $n=\dim M$), then
\begin{equation}\label{E:bw-curvature}
\mathscr{R}^{W}u=-\frac{1}{2}\sum_{j,k=1}^{n}v_j\bullet v_k\bullet R^{\nabla}(e_j,e_k)u,
\end{equation}
for all $u\in \Gamma_{C^{\infty}}(\vbn)$, where $R^{\nabla}$ is the curvature tensor corresponding to the connection
$\nabla$.

For future reference, in this paper we call $\mathscr{R}^{W}$ \emph{Weitzenb\"ock curvature operator}.

The formula~(\ref{E:Weitzenbock}) and theorem~\ref{T:main-1} with $X\equiv 0$ and $V=\mathscr{R}^{W}$ lead to the following corollary:

\begin{cor}\label{C:main-1} Assume that $M$ is a geodesically complete Riemannian manifold without boundary. Assume that $M$ satisfies the hypothesis (P1). Additionally, assume that one of the following two conditions holds:
\begin{enumerate}
  \item [(i)] $\rho(x)$ is not identically equal to $0$;
  \item [(ii)] the volume $\textrm{vol}(M)$ is infinite.
\end{enumerate}
Let $\vbn$ be a Clifford module over $M$ equipped with a Clifford connection $\nabla$, and let $D$ be the associated Dirac operator. Assume that the Weitzenb\"ock curvature operator $\mathscr{R}^{W}$ satisfies the inequality
\begin{equation}\label{E:v-rho-weitz}
\mathscr{R}^{W}(x)\geq -a\rho(x)I_{x},
\end{equation}
for all $x\in M$, where $0\leq a<1$ is a constant. (Here, $I_{x}\colon\vbn_x\to \vbn_{x}$ is the identity endomorphism, and the inequality~(\ref{E:v-rho-weitz}) is understood in quadratic-form sense in $\vbn_{x}$.)

Then, the set $\mathscr{K}_{D^2}$ from~(\ref{E:KER-D-squared}) has the following property: $\mathscr{K}_{D^2}=\{0\}$.
\end{cor}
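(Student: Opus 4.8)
The plan is to prove Theorem~\ref{T:main-1} directly and then deduce Corollary~\ref{C:main-1} as a special case; since the corollary is what we are asked to justify, I will focus on the reduction step, assuming the theorem at hand. First I would recall the Lichnerowicz--Weitzenb\"ock identity~(\ref{E:Weitzenbock}), which says that on a Clifford module $(\vbn,\nabla)$ one has $D^2 u = \nabla^{\dagger}\nabla u + \mathscr{R}^{W}u$ for all smooth sections $u$, and that $\mathscr{R}^{W}$ is a fiberwise self-adjoint smooth endomorphism. Thus $D^2 = H_{0,\mathscr{R}^{W}}$ in the notation of~(\ref{E:def-H}), i.e. the covariant Schr\"odinger operator with drift $X\equiv 0$ and potential $V=\mathscr{R}^{W}$. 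The point is then to check that the hypotheses of Theorem~\ref{T:main-1} are all met with this choice, so that its conclusion $\mathscr{K}_{H_{0,\mathscr{R}^{W}}}=\{0\}$ can be invoked, and finally to identify this kernel with $\mathscr{K}_{D^2}$.

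For the hypothesis check: geodesic completeness, the weighted Poincar\'e inequality (P1), and the alternative (m1)/(m2) are literally the same assumptions as in the corollary, so nothing is needed there. The drift condition~(\ref{E:hyp-X}) holds trivially with $\hat a = 0$ since $X\equiv 0$, and $0\le \hat a <1$ is satisfied. The potential condition~(\ref{E:v-rho}) becomes $\mathscr{R}^{W}(x) - (\div X)I_x = \mathscr{R}^{W}(x)\ge -a\rho(x)I_x$ because $\div X = \div 0 = 0$; this is precisely the assumed inequality~(\ref{E:v-rho-weitz}), and the constant requirement $0\le a < 1-\hat a = 1$ matches the corollary's $0\le a<1$. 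Hence Theorem~\ref{T:main-1} applies verbatim and yields $\mathscr{K}_{H_{0,\mathscr{R}^{W}}}=\{0\}$.

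It remains to connect $\mathscr{K}_{H_{0,\mathscr{R}^{W}}}$ with $\mathscr{K}_{D^2}$ as defined in~(\ref{E:KER-D-squared}). Here one uses that $D^2 u = 0$ in the distributional sense is, by~(\ref{E:Weitzenbock}), exactly $(\nabla^{\dagger}\nabla + \mathscr{R}^{W})u = 0 = H_{0,\mathscr{R}^{W}}u$ in the distributional sense; since $\mathscr{R}^{W}$ is smooth and self-adjoint and $D^2$, $H_{0,\mathscr{R}^{W}}$ are elliptic, local elliptic regularity (already cited in the excerpt, and used for both $\mathscr{K}_{H_{X,V}}$ and $\mathscr{K}_{D^2}$) shows every such $u$ is smooth, and then~(\ref{E:Weitzenbock}) is an identity of smooth sections. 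Therefore $\mathscr{K}_{D^2}=\mathscr{K}_{H_{0,\mathscr{R}^{W}}}=\{0\}$, which is the assertion. (One could alternatively pass through~(\ref{E:KER-D-squared-1}), identifying $\mathscr{K}_{D^2}$ with $\mathscr{K}_{D}$, but this is not needed for the vanishing conclusion.)

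The only genuine work in the corollary is bookkeeping: verifying that $\div X=0$ when $X\equiv 0$ so that~(\ref{E:v-rho}) reduces to~(\ref{E:v-rho-weitz}), and that the identification of distributional kernels is legitimate. I do not expect any real obstacle here — the substance lies entirely in Theorem~\ref{T:main-1} itself, whose proof (carried out in section~\ref{S:pf-1-2}) presumably combines the Weitzenb\"ock-type estimate with a Kato inequality and the weighted Poincar\'e inequality~(\ref{E:poincare-1}) applied to cut-off functions produced by completeness. If anything needs care in the corollary, it is making explicit that the distributional equation $D^2u=0$ and $H_{0,\mathscr{R}^{W}}u=0$ have the same solution set, which follows once one notes that $(D^2)^{\dagger}=D^2$ and $(H_{0,\mathscr{R}^{W}})^{\dagger}=H_{0,\mathscr{R}^{W}}$ (the latter because $\nabla^{\dagger}\nabla$ is formally self-adjoint and $\mathscr{R}^{W}$ is fiberwise self-adjoint), so that both distributional formulations are tested against the same space $\vcomp$ with the same operator.
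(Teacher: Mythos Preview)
Your proposal is correct and follows exactly the paper's approach: the paper derives Corollary~\ref{C:main-1} in one line by invoking the Lichnerowicz--Weitzenb\"ock formula~(\ref{E:Weitzenbock}) and applying Theorem~\ref{T:main-1} with $X\equiv 0$ and $V=\mathscr{R}^{W}$. Your additional care in matching $\hat a=0$, $\div X=0$, and in identifying the distributional kernels via elliptic regularity is more detail than the paper supplies, but entirely consistent with it.
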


\begin{remark}\label{R:examples} Corollary~\ref{C:main-1} can be applied in the setting of a spin manifold $M$ and the associated spinor bundle $\vbn$, with $D$ being the so-called \emph{classical Dirac operator}. In this situation (see proposition 10.4.4 in~\cite{Taylor}), $\mathscr{R}^{W}= \textrm{scal}_{M}/4$ where $\textrm{scal}_{M}$ is the scalar curvature of $M$ (that is, the trace of the Ricci tensor). Corollary~\ref{C:main-1} can also be applied in the setting of $\vbn=\Lambda_{\CC}^{k}T^*M$ over an oriented Riemannian manifold $M$ (the complexified  version of $\Lambda^{k}T^*M$ with the corresponding Hermitian extension of the Riemannian structure and the corresponding extension of $\nabla^{LC}$). As explained in section 10.1 of~\cite{Taylor}, the bundle $\vbn=\Lambda_{\CC}^{k}T^*M$ (with its natural metric and connection $\nabla^{LC}$), has the structure of a Clifford module. In this situation, the associated Dirac operator $D$ is the so-called \emph{Gauss--Bonnet operator} operator $d+\delta$, where
\[
d\colon \Gamma_{C^{\infty}}(\Lambda_{\CC}^{k}T^*M)\to \Gamma_{C^{\infty}}(\Lambda_{\CC}^{k+1}T^*M),\quad \delta\colon \Gamma_{C^{\infty}}(\Lambda_{\CC}^{k}T^*M)\to \Gamma_{C^{\infty}}(\Lambda_{\CC}^{k-1}T^*M).
\]
are the standard differential and codifferential respectively.

In this setting, the operator $D^2$ becomes $D^2=d\delta+\delta d$, the so-called Hodge--deRham Laplacian acting on (complex-valued) $k$-forms, and the set
$\mathscr{K}_{D^2}$ from~(\ref{E:KER-D-squared}) is known as the space of \emph{$L^2$-harmonic complex-valued $k$-forms}.

Furthermore, in the setting $\vbn=\Lambda_{\CC}^{k}T^*M$, the operator $\mathscr{R}^{W}$ depends on the Riemannian curvature tensor of $M$, and, by proposition 10.4.2 in~\cite{Taylor}, in the case $\vbn=\Lambda_{\CC}^{1}T^*M$ we have $\mathscr{R}^{W}=\textrm{Ric}_{M}$, where $\textrm{Ric}_{M}$ is the Ricci tensor of $M$.

Thus, in the case of $\vbn=\Lambda_{\CC}^{k}T^*M$ and the Gauss--Bonnet operator $D=d+\delta$ , corollary~\ref{C:main-1} recovers theorem 5 from~\cite{Vieira-16} with $\rho\geq 0$, a vanishing result concerning $L^2$-harmonic complex-valued $k$-forms on $M$.
\end{remark}

Before stating the second theorem, we recall the concept of the bottom of the spectrum:

\subsection{Bottom of the Spectrum of $\Delta_M$} In the setting of a geodesically complete Riemannian manifold $M$, the operator $\Delta_{M}|_{\mcomp}$ is essentially self-adjoint in $L^2(M)$, with the corresponding self-adjoint closure denoted (for simplicity) again by $\Delta_{M}$. The \emph{bottom of the spectrum} of the self-adjoint operator $\Del_{M}$ is defined as
\begin{equation}\label{E:first-ev}
\lambda_1(M):=\inf_{f\in \mcomp}\frac{\int_{M}|df(x)|^2\,d\nu_{g}(x)}{\int_{M}|f(x)|^2d\nu_{g}(x)}
\end{equation}

\begin{remark}\label{R:lambda-1} By~(\ref{E:first-ev}) we have
\begin{equation}\label{E:first-ev-ineq}
\lambda_1(M)\int_{M}|f(x)|^2d\nu_{g}(x)\leq \int_{M}|df(x)|^2\,d\nu_{g}(x),
\end{equation}
for all $f\in\mcomp$, where $d\nu_{g}$ is the volume element on $M$ corresponding to the metric $g$.
\end{remark}

We now state the second theorem.

\begin{theorem} \label{T:main-2} Assume that $M$ is a geodesically complete Riemannian manifold without boundary. Assume that $M$ satisfies the hypothesis (P1).

Let $\vbn$ be a Hermitian vector bundle over $M$ equipped with a metric covariant derivative $\nabla$. Let $X$ be a smooth, real vector field on $M$ satisfying the condition~(\ref{E:hyp-X}).

Let $V\in \Gamma_{C^{\infty}}(\End \vbn)$ be a fiberwise self-adjoint endomorphism such that
\begin{equation}\label{E:v-rho-a-b}
V(x)-(\div X)I_{x}\geq -(a\rho(x)+b)I_{x},
\end{equation}
for all $x\in M$, where $0\leq a<1-\hat{a}$ and $b\geq 0$ are constants, with $0\leq \hat{a}<1$ as in~(\ref{E:hyp-X}). (Here, $\div X$ is as in~(\ref{E:div-def}), and $I_{x}\colon \vbn_x\to \vbn_{x}$ is the identity endomorphism. The inequality~(\ref{E:v-rho-a-b}) is understood in quadratic-form sense.)

Furthermore, assume that
\begin{equation}\label{E:f-eig-ab}
\lambda_1(M)>\frac{b}{1-a-\hat{a}},
\end{equation}
where $\lambda_1(M)$ is as in~(\ref{E:first-ev}) and $0\leq \hat{a}<1$ is as in~(\ref{E:hyp-X}).

Then, the set $\mathscr{K}_{H_{X,V}}$ from~(\ref{E:KER-H-V}) has the following property: $\mathscr{K}_{H_{X,V}}=\{0\}$.
\end{theorem}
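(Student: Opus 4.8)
The plan is to run the integration-by-parts (Dodziuk-type) scheme that proves Theorem~\ref{T:main-1} in Section~\ref{S:pf-1-2}, with one extra ingredient: the bottom of the spectrum $\lambda_1(M)$ is used to absorb the additive constant $b$ from~(\ref{E:v-rho-a-b}). First I would take $u\in\mathscr{K}_{H_{X,V}}$ and, by local elliptic regularity (Section~\ref{SS:min-max-rel-1}), regard it as a smooth section with $H_{X,V}u=0$ pointwise on $M$. Using geodesic completeness, fix $o\in M$ and a sequence of cut-off functions $\phi_k\in\mcomp$ (as in Section~\ref{SS:cut-off}) with $0\le\phi_k\le 1$, $\phi_k\equiv 1$ on the geodesic ball of radius $k$ about $o$, $\supp\phi_k$ contained in the ball of radius $2k$, and $|d\phi_k|\le C/k$. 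Whenever the lack of smoothness of $|u|$ at its zeros intervenes, I would work with $|u|_\epsilon:=\sqrt{|u|^2+\epsilon^2}$ (so $\phi_k|u|_\epsilon\in\mcomp$ and, by the Kato inequality, $|d|u|_\epsilon|\le|\nabla u|$) and pass to $\epsilon\to 0$ at the end; below I suppress this.

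The first computation is to pair $H_{X,V}u=0$ with the compactly supported section $\phi_k^2u$, integrate over $M$, take real parts, and integrate by parts once in the $\nabla^{\dagger}\nabla$-term (using that $\nabla$ is metric), obtaining
\begin{equation*}
\int_M\phi_k^2|\nabla u|^2\,d\nu_g+\int_M\phi_k^2\langle Vu,u\rangle\,d\nu_g=-\int_M\phi_k\langle d(|u|^2),d\phi_k\rangle\,d\nu_g-\RE\int_M\phi_k^2\langle\nabla_Xu,u\rangle\,d\nu_g.
\end{equation*}
Into this I would insert the pointwise bound $\langle Vu,u\rangle\ge(\div X)|u|^2-(a\rho+b)|u|^2$ supplied by~(\ref{E:v-rho-a-b}) and then integrate the resulting term $\int_M\phi_k^2(\div X)|u|^2$ by parts using the definition~(\ref{E:div-def}) of $\div X$ (this re-expresses it through a $\nabla_X$-term and a $d\phi_k$-term). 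After regrouping, this gives an inequality of the form
\begin{equation*}
\int_M\phi_k^2|\nabla u|^2-\int_M\phi_k^2(a\rho+b)|u|^2\le\RE\int_M\phi_k^2\langle\nabla_Xu,u\rangle-\int_M\phi_k\langle d(|u|^2),d\phi_k\rangle+2\int_M\phi_k|u|^2\langle X^\flat,d\phi_k\rangle,
\end{equation*}
all integrals being taken against $d\nu_g$.

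The core of the argument is a careful absorption on the right-hand side. Using $|X|\le\hat{a}\sqrt{\rho}$ from~(\ref{E:hyp-X}) and $|d(|u|^2)|\le 2|u|\,|\nabla u|$, I would bound the drift term by $\hat{a}\int\phi_k^2\sqrt{\rho}\,|\nabla u|\,|u|$ and then, by Cauchy--Schwarz followed by Young's inequality with parameter $\alpha$, by $\hat{a}\alpha\int\phi_k^2|\nabla u|^2+\tfrac{\hat{a}}{4\alpha}\int\phi_k^2\rho|u|^2$; the two $d\phi_k$-terms split similarly, each into a piece controlled by $\int\phi_k^2|\nabla u|^2$ or $\int\phi_k^2\rho|u|^2$ and a piece of size $O(k^{-2}\|u\|_2^2)$. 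The weighted Poincar\'e inequality~(\ref{E:poincare-1}) applied to $\phi_k|u|$, together with $|d|u||\le|\nabla u|$, bounds $\int\phi_k^2\rho|u|^2$ by $\int\phi_k^2|\nabla u|^2$ up to another $O(k^{-2})$ error. Choosing $\alpha=\tfrac12$ (which minimizes $\alpha+\tfrac1{4\alpha}$) and all other Young parameters small, I would arrive at $(1-a-\hat{a}-\mu)\int\phi_k^2|\nabla u|^2\le b\int\phi_k^2|u|^2+o_k(1)$ for every $\mu>0$; letting $k\to\infty$ (monotone convergence, which also shows $\int_M|\nabla u|^2<\infty$) and then $\mu\to 0$ gives $(1-a-\hat{a})\int_M|\nabla u|^2\le b\|u\|_2^2$. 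Separately, applying~(\ref{E:first-ev-ineq}) to $\phi_k|u|$ and using $|d|u||\le|\nabla u|$ yields, after the same limiting procedure, $\lambda_1(M)\|u\|_2^2\le\int_M|\nabla u|^2$. Combining the two estimates, $\big[(1-a-\hat{a})\lambda_1(M)-b\big]\|u\|_2^2\le 0$; since $1-a-\hat{a}>0$ and~(\ref{E:f-eig-ab}) forces $(1-a-\hat{a})\lambda_1(M)>b$, we conclude $u=0$, i.e. $\mathscr{K}_{H_{X,V}}=\{0\}$.

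The step I expect to be the main obstacle is the control of the (generally unbounded) drift field $X$. The integration by parts is legitimate because $\phi_k^2u$ is compactly supported and $u$ smooth, but the drift integrals it produces cannot be discarded: they must be absorbed into $\int\phi_k^2|\nabla u|^2$ and into the weighted Poincar\'e term. The bound $|X|\le\hat{a}\sqrt{\rho}$ is precisely what makes this possible---each such integral Cauchy--Schwarz-splits into a factor controlled (after using the Poincar\'e inequality) by $\int\phi_k^2|\nabla u|^2$ and a factor carrying $|d\phi_k|$ that vanishes as $k\to\infty$---and the hypotheses $\hat{a}<1$ and $a+\hat{a}<1$ are exactly what keep the coefficient of $\int\phi_k^2|\nabla u|^2$ strictly positive after every absorption. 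A related subtlety is that $\sqrt{\rho}$ need not be bounded, so one cannot invoke a pointwise estimate on $\rho$; the error terms must be organized so that $\sqrt{\rho}\,|u|$ (square-integrable against $\phi_k^2$ only after the Poincar\'e inequality is applied) is always paired with the small factor $|d\phi_k|$.
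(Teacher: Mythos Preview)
Your proposal is correct and is, in substance, the same argument the paper gives, only organized differently. The paper first uses the Bochner identity~(\ref{E:bochner-2}) together with Kato's inequality~(\ref{E:kato-fiber}) to pass from $H_{X,V}u=0$ to a scalar differential inequality for $h:=|u|$ of the form~(\ref{E:distr-vieira}), and then invokes a separately proved Lemma~\ref{L:V-DRIFT}(ii) (the cut-off/absorption computation, carried out for scalar functions) to obtain $\int_M|d|u||^2\le \frac{b}{1-a-\hat a}\int_M|u|^2$; this is combined with~(\ref{E:first-ev-ineq}) exactly as you do. Your version bypasses the Bochner reduction and performs the identical cut-off/absorption computation directly at the level of sections by pairing with $\phi_k^2u$; the drift estimate you reach with $\alpha=\tfrac12$ is precisely the paper's~(\ref{E:dv-6}), and your use of Poincar\'e on $\phi_k|u|$ is~(\ref{E:poincare-tmp}). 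The only cosmetic difference is that your final energy inequality is stated for $\int|\nabla u|^2$ rather than $\int|d|u||^2$, which is harmless since you then apply Kato once more when invoking $\lambda_1(M)$. The paper's packaging into Lemma~\ref{L:V-DRIFT} has the mild advantage of being reusable (it also feeds Theorem~\ref{T:main-1}); your direct route is slightly shorter for this single theorem.
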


The formula~(\ref{E:Weitzenbock}) and Theorem~\ref{T:main-2} with $X\equiv 0$ and $V=\mathscr{R}^{W}$ lead to the following corollary:

\begin{cor}\label{C:main-2} Assume that $M$ is a geodesically complete Riemannian manifold without boundary. Assume that $M$ satisfies the hypothesis (P1).

Let $\vbn$ be a Clifford module over $M$ equipped with a Clifford connection $\nabla$, and let $D$ be the associated Dirac operator. Assume that the inequality~(\ref{E:v-rho-a-b}) is satisfied with $X\equiv 0$ and the Weitzenb\"ock curvature operator $\mathscr{R}^{W}$ in place of $V$. Furthermore, assume that
\[
\lambda_1(M)>\frac{b}{1-a}.
\]
Then, the set $\mathscr{K}_{D^2}$ from~(\ref{E:KER-D-squared}) has the following property: $\mathscr{K}_{D^2}=\{0\}$.
\end{cor}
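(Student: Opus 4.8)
The plan is to derive Corollary~\ref{C:main-2} directly from Theorem~\ref{T:main-2} in exactly the way Corollary~\ref{C:main-1} was derived from Theorem~\ref{T:main-1}. First I would invoke the Lichnerowicz--Weitzenb\"ock formula~(\ref{E:Weitzenbock}): on a Clifford module $(\vbn,\nabla)$ with associated Dirac operator $D$, one has $D^2 u = \nabla^{\dagger}\nabla u + \mathscr{R}^{W}u$ for all $u\in\Gamma_{C^{\infty}}(\vbn)$, where $\mathscr{R}^{W}\in\Gamma_{C^{\infty}}(\End\vbn)$ is fiberwise self-adjoint. Hence $D^2 = H_{X,V}$ in the sense of~(\ref{E:def-H}) with the choices $X\equiv 0$ and $V=\mathscr{R}^{W}$. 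Consequently the distributional kernels coincide: $\mathscr{K}_{D^2}=\mathscr{K}_{H_{0,\mathscr{R}^{W}}}$, with $\mathscr{K}_{H_{X,V}}$ as in~(\ref{E:KER-H-V}).

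Next I would verify that the hypotheses of Theorem~\ref{T:main-2} are met with this substitution. Since $M$ is geodesically complete without boundary and satisfies (P1) by assumption, those hypotheses carry over verbatim. With $X\equiv 0$ we may take $\hat{a}=0$ in~(\ref{E:hyp-X}) (the bound $|X(x)|\le\hat a\sqrt{\rho(x)}$ holds trivially), and $\div X = -d^{\dagger}(X^{\flat}) = 0$, so the curvature hypothesis~(\ref{E:v-rho-a-b}) becomes exactly $\mathscr{R}^{W}(x)\ge -(a\rho(x)+b)I_x$, which is the assumed inequality; here $0\le a<1=1-\hat a$ and $b\ge 0$ as required. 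Finally, with $\hat a=0$ the spectral condition~(\ref{E:f-eig-ab}), namely $\lambda_1(M)>b/(1-a-\hat a)$, reduces precisely to $\lambda_1(M)>b/(1-a)$, which is the stated hypothesis of the corollary.

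Having checked all the hypotheses, Theorem~\ref{T:main-2} applies and yields $\mathscr{K}_{H_{0,\mathscr{R}^{W}}}=\{0\}$; combining this with the identification $\mathscr{K}_{D^2}=\mathscr{K}_{H_{0,\mathscr{R}^{W}}}$ from the first step gives $\mathscr{K}_{D^2}=\{0\}$, as claimed. There is really no obstacle here: the corollary is a pure specialization, and the only point deserving a word of care is the bookkeeping that $\div X=0$ when $X\equiv 0$ (immediate from~(\ref{E:div-def})) and that taking $\hat a=0$ is legitimate, i.e.\ that the constant range $0\le\hat a<1$ includes $\hat a=0$. Both are trivial, so the proof is a three-line deduction; for completeness one might also remark, as in Remark~\ref{R:examples}, that in the case $\vbn=\Lambda_{\CC}^{k}T^*M$ with $D=d+\delta$ the conclusion recovers theorem~6 of~\cite{Vieira-16} with $\rho\ge 0$.
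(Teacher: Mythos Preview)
Your proposal is correct and matches the paper's own derivation: the paper simply states that the formula~(\ref{E:Weitzenbock}) together with Theorem~\ref{T:main-2} applied with $X\equiv 0$ and $V=\mathscr{R}^{W}$ yields the corollary, which is exactly the specialization you carry out (your additional bookkeeping that $\hat a=0$ and $\div X=0$ just makes explicit what the paper leaves implicit).
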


\begin{remark}\label{R:vieira-a-b} In the case of $\vbn=\Lambda_{\CC}^{k}T^*M$ and the Gauss--Bonnet operator $D=d+\delta$ (see remark~\ref{R:examples} for the notations), corollary~\ref{C:main-2} recovers theorem 6 from~\cite{Vieira-16} with $\rho\geq 0$, a vanishing result concerning $L^2$-harmonic complex-valued $k$-forms on $M$.
\end{remark}

For the remainder of this section, $\vbn$ is a Clifford vector bundle over $M$ equipped with a Clifford connection $\nabla$, and $D$ is the associated Dirac operator.

In the next two theorems we make the following assumption on $D$:

\subsection{Hypothesis (P2)}\label{SS:assump-P2} Let $\rho\colon M\to \RR$ be a continuous function. Assume that
\begin{itemize}
  \item [(P2a)] $\rho(x)\geq 0$ and $\rho(x)$ is not identically equal to $0$;
  \item [(P2b)] for all $u\in\vcomp$ we have
\begin{equation}\label{E:poincare-2}
\int_{M}\rho(x)|u(x)|^2\,d\nu_{g}(x)\leq \int_{M}|Du(x)|^2d\nu_{g}(x),
\end{equation}
where $|\cdot|$ is the fiberwise norm in $\vbn_{x}$.
\end{itemize}
\begin{theorem}\label{T:main-3} Assume that $M$ is a geodesically complete Riemannian manifold without boundary.
Let $\vbn$ be a Clifford module over $M$ equipped with a Clifford connection $\nabla$, and let $D$ be the associated Dirac operator. Assume that the hypothesis (P2) is satisfied.

Furthermore, assume that the Weitzenb\"ock curvature operator $\mathscr{R}^{W}$ satisfies the inequality
\begin{equation}\label{E:v-rho-new}
\mathscr{R}^{W}(x)\geq -a\rho(x)I_{x},
\end{equation}
for all $x\in M$, where $a\geq 0$ is a constant.  (Here, $I_{x}\colon\vbn_x\to \vbn_{x}$ is the identity endomorphism, and the inequality~(\ref{E:v-rho-new}) is understood in quadratic-form sense in $\vbn_{x}$.)

Then, the set $\mathscr{K}_{D^2}$ from~(\ref{E:KER-D-squared}) has the following property: $\mathscr{K}_{D^2}=\{0\}$.
\end{theorem}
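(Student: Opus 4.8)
The plan is to combine the Lichnerowicz--Weitzenb\"ock formula~\eqref{E:Weitzenbock} with the weighted Poincar\'e inequality~\eqref{E:poincare-2} for $D$ by means of the standard cut-off integration by parts; since the Poincar\'e inequality here involves $D$ itself, no restriction on the size of $a$ will be needed. Let $u\in\mathscr{K}_{D^2}$. By Remark~\ref{RR:lawson-spin} (which uses the geodesic completeness of $M$ through Theorem~II.5.4 of~\cite{lm-spin-book}) we have $\mathscr{K}_{D^2}=\mathscr{K}_{D}$, so $Du=0$, and local elliptic regularity gives $u\in\Gamma_{C^{\infty}}(\vbn)\cap\Gamma_{L^2}(\vbn)$; thus~\eqref{E:Weitzenbock} reads $\nabla^{\dagger}\nabla u=-\mathscr{R}^{W}u$. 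Fix $p\in M$ and take the cut-off functions $\{\phi_k\}$ supplied by completeness (see Section~\ref{SS:cut-off}): $0\le\phi_k\le 1$, $\phi_k\equiv 1$ on the geodesic ball $B(p,k)$, $\supp\phi_k\subset B(p,2k)$, and $|d\phi_k|\le C/k$ with $C$ independent of $k$. Pairing $\nabla^{\dagger}\nabla u=-\mathscr{R}^{W}u$ with $\phi_k^2 u$ in $\Gamma_{L^2}(\vbn)$, integrating by parts, applying Cauchy--Schwarz to the resulting cross term and the bound~\eqref{E:v-rho-new} to the curvature term, one arrives, writing $A_k:=\int_{M}\phi_k^2|\nabla u|^2\,d\nu_{g}$, at
\begin{equation*}
A_k\ \le\ 2\,A_k^{1/2}\Big(\int_{M}|d\phi_k|^2|u|^2\,d\nu_{g}\Big)^{1/2}+a\int_{M}\rho\,\phi_k^2|u|^2\,d\nu_{g}.
\end{equation*}

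The key step is to control the last two quantities by the Poincar\'e inequality. Since $\phi_k u\in\vcomp$ and $Du=0$, the definition of $D$ gives $D(\phi_k u)=i\,(d\phi_k)\bullet u$, and property~(i) of the Clifford module yields $|(d\phi_k)\bullet u|=|d\phi_k|\,|u|$ pointwise; hence~\eqref{E:poincare-2} applied to $\phi_k u$ gives
\begin{equation*}
\int_{M}\rho\,\phi_k^2|u|^2\,d\nu_{g}\ \le\ \int_{M}|D(\phi_k u)|^2\,d\nu_{g}\ =\ \int_{M}|d\phi_k|^2|u|^2\,d\nu_{g}\ =:\ B_k,
\end{equation*}
and $B_k\le (C/k)^2\|u\|_{2}^{2}\to 0$ since $u\in\Gamma_{L^2}(\vbn)$. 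Substituting into the preceding display yields $A_k\le 2(A_kB_k)^{1/2}+aB_k$, hence $A_k\le(1+\sqrt{1+a})^{2}B_k\to 0$. Because $\phi_k\equiv 1$ on $B(p,k)$, the non-decreasing quantity $\int_{B(p,k)}|\nabla u|^2\,d\nu_{g}$ is dominated by $A_k$ and hence equals $0$ for every $k$, so $\nabla u\equiv 0$ on $M$; since $\nabla$ is a metric connection and $M$ is connected, $|u|$ is constant. On the other hand $\int_{B(p,k)}\rho|u|^2\,d\nu_{g}\le B_k\to 0$, so by monotone convergence $\int_{M}\rho|u|^2\,d\nu_{g}=0$; by~(P2a) the set $\{\rho>0\}$ is non-empty and open, so $u$ vanishes there, and combined with $|u|$ being constant this forces $u\equiv 0$. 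Thus $\mathscr{K}_{D^2}=\{0\}$.

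The main obstacle I anticipate is the bookkeeping in the last two paragraphs: one must arrange the Weitzenb\"ock integration by parts so that both the cross term \emph{and} the term $a\int_{M}\rho\,\phi_k^2|u|^2\,d\nu_{g}$ are absorbed into the single vanishing quantity $B_k$ (this is exactly where using~\eqref{E:poincare-2} for $D$, rather than a Poincar\'e inequality for $d$, pays off and removes any upper bound on $a$), and then to run the endgame --- passing from $A_k\to 0$ to $\nabla u\equiv 0$ without any hypothesis on $\operatorname{vol}(M)$, and exploiting $\rho\not\equiv 0$ in place of a unique-continuation argument. A minor point is justifying the integration by parts and the use of~\eqref{E:poincare-2} for the section $\phi_k u$, which is routine provided the $\phi_k$ are taken smooth (or by an approximation argument).
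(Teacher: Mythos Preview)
Your proof is correct and follows the same overall architecture as the paper's: both hinge on applying the hypothesis~(P2) to the cut-off section $\phi_k u$ with $Du=0$ to obtain $\int_M\rho|u|^2\,d\nu_g\le 0$ (this is precisely the paper's Lemma~\ref{L:zhou-lemma}), and both finish by showing $|u|$ is constant and then invoking $\rho\not\equiv 0$.

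The one genuine difference is in how the ``$|u|$ constant'' step is reached. The paper passes to the scalar function $|u|$ via the Bochner formula~\eqref{E:bochner-2} and Kato's inequality~\eqref{E:kato-fiber}, obtaining the distributional inequality $|u|\Delta_M|u|\le a\rho|u|^2$, and then quotes part~(i) of Lemma~\ref{L:V-DRIFT} to get $\int_M|d|u||^2\le a\int_M\rho|u|^2\le 0$. You instead stay at the section level, pairing $\nabla^{\dagger}\nabla u=-\mathscr{R}^{W}u$ directly with $\phi_k^2u$ and absorbing both the cross term and the curvature term into the single vanishing quantity $B_k$; this yields the stronger conclusion $\nabla u\equiv 0$ (a parallel section) without ever invoking Kato's inequality or Lemma~\ref{L:V-DRIFT}. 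Your route is marginally more self-contained and gives a bit more information; the paper's route has the advantage of reusing the scalar machinery already set up for Theorems~\ref{T:main-1} and~\ref{T:main-2}.
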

\begin{remark}\label{R:zhou-a} In the case of $\vbn=\Lambda_{\CC}^{k}T^*M$ and the Gauss--Bonnet operator $D=d+\delta$ (see remark~\ref{R:examples} for the notations), theorem~\ref{T:main-3} recovers theorem 1.4 from~\cite{Zhou-20}, a vanishing result concerning $L^2$-harmonic complex-valued $k$-forms on $M$.
\end{remark}

\begin{theorem}\label{T:main-4} Assume that $M$ is a geodesically complete Riemannian manifold without boundary.
Let $\vbn$ be a Clifford module over $M$ equipped with a Clifford connection $\nabla$, and let $D$ be the associated Dirac operator. Assume that the hypothesis (P2) is satisfied. Furthermore, assume that the Weitzenb\"ock curvature operator $\mathscr{R}^{W}$ satisfies the inequality
\begin{equation}\label{E:v-rho-a-b-new}
\mathscr{R}^{W}(x)\geq -(a\rho(x)+b)I_{x},
\end{equation}
for all $x\in M$, where $a\geq 0$ and $b\geq 0$ are constants.  (Here, $I_{x}\colon\vbn_x\to \vbn_{x}$ is the identity endomorphism, and the inequality~(\ref{E:v-rho-a-b-new}) is understood in quadratic-form sense in $\vbn_{x}$.)

Furthermore, assume that
\begin{equation}\label{E:f-eig-ab-new}
\lambda_1(M)>b,
\end{equation}
where $\lambda_1(M)$ is as in~(\ref{E:first-ev}).

Then, the set $\mathscr{K}_{D^2}$ from~(\ref{E:KER-D-squared}) has the following property: $\mathscr{K}_{D^2}=\{0\}$.
\end{theorem}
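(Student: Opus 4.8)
The plan is to run the classical integration-by-parts and cut-off argument (in the spirit of \cite{Dod-81}, \cite{Zhou-20}), but feeding in the weighted Poincar\'e inequality \eqref{E:poincare-2} for $D$ (rather than one for functions) and absorbing the extra lower-order constant $b$ by means of the spectral hypothesis \eqref{E:f-eig-ab-new}. Let $u\in\mathscr{K}_{D^2}$. By local elliptic regularity $u$ is a smooth section in $\Gamma_{L^2}(\vbn)$, and by \eqref{E:KER-D-squared-1} we have $Du=0$; substituting this into \eqref{E:Weitzenbock} gives the pointwise identity $\nabla^{\dagger}\nabla u=-\mathscr{R}^{W}u$. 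Fix a smooth $\phi$ with compact support and $0\le\phi\le1$. Pairing the identity against $\phi^{2}u\in\vcomp$ and integrating by parts (definition of $\nabla^{\dagger}$) yields, with $\nabla(\phi^{2}u)=2\phi\,d\phi\otimes u+\phi^{2}\nabla u$ and \eqref{E:v-rho-a-b-new},
\[
\int_{M}\phi^{2}|\nabla u|^{2}+2\int_{M}\phi\,\langle\nabla u,\,d\phi\otimes u\rangle=-\int_{M}\phi^{2}\langle\mathscr{R}^{W}u,u\rangle\le\int_{M}\phi^{2}(a\rho+b)|u|^{2}.
\]
Absorbing the cross term via $2\phi|\nabla u|\,|d\phi|\,|u|\le\epsilon\phi^{2}|\nabla u|^{2}+\epsilon^{-1}|d\phi|^{2}|u|^{2}$ gives
\[
(1-\epsilon)\int_{M}\phi^{2}|\nabla u|^{2}\le\epsilon^{-1}\int_{M}|d\phi|^{2}|u|^{2}+a\int_{M}\phi^{2}\rho|u|^{2}+b\int_{M}\phi^{2}|u|^{2}.
\]

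Next I would dispatch the two potential terms on the right. Since $Du=0$, the Leibniz rule for the Dirac operator gives $D(\phi u)=i\,d\phi\bullet u$, so $|D(\phi u)|=|d\phi|\,|u|$ (Clifford multiplication by a covector has operator norm equal to its length, by property (i) of the Clifford module); hence \eqref{E:poincare-2} applied to $\phi u\in\vcomp$ gives $\int_{M}\phi^{2}\rho|u|^{2}\le\int_{M}|d\phi|^{2}|u|^{2}$. For the $b$-term, set $|u|_{\epsilon}:=\sqrt{|u|^{2}+\epsilon^{2}}$ (smooth, as $u$ is smooth); the Kato inequality gives $|d\,|u|_{\epsilon}|\le|\nabla u|$, and applying \eqref{E:first-ev-ineq} to $\phi|u|_{\epsilon}\in\mcomp$, expanding $d(\phi|u|_{\epsilon})$ with a Young parameter $\delta$, and letting $\epsilon\to0$ yields
\[
b\int_{M}\phi^{2}|u|^{2}\le\frac{b}{\lambda_{1}(M)}\Big[(1+\delta^{-1})\int_{M}|d\phi|^{2}|u|^{2}+(1+\delta)\int_{M}\phi^{2}|\nabla u|^{2}\Big].
\]
Substituting both estimates into the previous display and writing $\beta:=b/\lambda_{1}(M)$ leaves
\[
\big(1-\epsilon-\beta(1+\delta)\big)\int_{M}\phi^{2}|\nabla u|^{2}\le C\int_{M}|d\phi|^{2}|u|^{2},
\]
with $C=C(\epsilon,\delta,a,\beta)=\epsilon^{-1}+a+\beta(1+\delta^{-1})$ independent of $\phi$.

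The main obstacle is precisely this last inequality, and it is exactly where the hypothesis is used: \eqref{E:f-eig-ab-new} says $\beta<1$, so one may first choose $\delta>0$ with $\beta(1+\delta)<1$ (e.g. $\beta(1+\delta)=(1+\beta)/2$) and then $\epsilon>0$ with $1-\epsilon-\beta(1+\delta)>0$, making the coefficient on the left a positive constant; without $\lambda_{1}(M)>b$ the $b$-term cannot be returned to the gradient term and the scheme collapses. With $\epsilon,\delta$ so fixed, I would insert a cut-off sequence $\phi_{k}$ with $\phi_{k}\equiv1$ on the geodesic ball $B_{k}(o)$, $\supp\phi_{k}\subseteq B_{2k}(o)$, $0\le\phi_{k}\le1$ increasing to $1$, and $|d\phi_{k}|\le C_{0}/k$; such a sequence exists by geodesic completeness.

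Finally, since $u\in\Gamma_{L^2}(\vbn)$ we have $\int_{M}|d\phi_{k}|^{2}|u|^{2}\le C_{0}^{2}k^{-2}\|u\|_{2}^{2}\to0$, so letting $k\to\infty$ (using $B_{k}\uparrow M$) forces $\int_{M}|\nabla u|^{2}=0$; thus $\nabla u\equiv0$, hence $d|u|^{2}=2\RE\langle\nabla u,u\rangle=0$ and $|u|$ is a constant on the connected manifold $M$. Running the cut-off argument once more in \eqref{E:poincare-2} alone gives $\int_{M}\phi_{k}^{2}\rho|u|^{2}\le\int_{M}|d\phi_{k}|^{2}|u|^{2}\to0$, so by monotone convergence $|u|^{2}\int_{M}\rho\,d\nu_{g}=\int_{M}\rho|u|^{2}=0$; since $\rho\ge0$ is continuous and not identically $0$ (hypothesis (P2a)), $\int_{M}\rho\,d\nu_{g}>0$, whence $|u|\equiv0$, i.e. $u=0$. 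Therefore $\mathscr{K}_{D^2}=\{0\}$. The only points requiring care beyond what is indicated — admissibility of $\phi^{2}u$, $\phi u$, $\phi|u|_{\epsilon}$ as test sections/functions and the $\epsilon\to0$ and $k\to\infty$ limit passages — are routine.
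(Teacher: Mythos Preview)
Your argument is correct. It differs from the paper's proof mainly in organization and in which intermediate quantity is controlled. The paper works with the scalar function $h=|u|$: from the Bochner--Weitzenb\"ock formula together with Kato's inequality it derives the distributional inequality $h\Delta_M h\le a\rho h^2+bh^2$, applies part~(i) of Lemma~\ref{L:V-DRIFT} to obtain $\int_M|dh|^2\le a\int_M\rho h^2+b\int_M h^2$, uses Lemma~\ref{L:zhou-lemma} (exactly the $D(\phi u)=i\,d\phi\bullet u$ computation you perform) to make the $a$-term nonpositive, and then concludes directly from $\lambda_1(M)\int_M h^2\le\int_M|dh|^2\le b\int_M h^2$ that $u=0$. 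You instead integrate by parts against $\phi^{2}u$ and keep $\int_M\phi^{2}|\nabla u|^{2}$ as the controlled quantity, feeding both (P2b) and the spectral gap (via Kato for the regularization $|u|_{\epsilon}$) into a single cut-off estimate; this yields the stronger intermediate conclusion $\nabla u\equiv0$, after which you finish by invoking (P2a) (the assumption $\rho\not\equiv0$). The paper's route is more modular, reusing two stated lemmas, and its endgame is a line shorter since it does not need (P2a) explicitly; your route is self-contained and sidesteps the density-of-$\mcomp$-in-$W^{1,2}(M)$ argument used to extend \eqref{E:first-ev-ineq} to $|u|$.
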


\begin{remark}\label{R:zhou-a-b} In the case of $\vbn=\Lambda_{\CC}^{k}T^*M$ and the Gauss--Bonnet operator $D=d+\delta$ (see remark~\ref{R:examples} for the notations), theorem~\ref{T:main-4} recovers theorem 4.1 from~\cite{Zhou-20}, a vanishing result concerning $L^2$-harmonic complex-valued $k$-forms on $M$.
\end{remark}

\section{Proofs of Theorems~\ref{T:main-1} and~\ref{T:main-2}}\label{S:pf-1-2}

We begin with a description of the first-order $L^2$-type Sobolev spaces of (complex-valued) functions on $M$.

\subsection{Sobolev Space Notations} We define
\[
W^{1,2}(M):=\{v\in L^2(M)\colon dv\in \Gamma_{L^2}(\Lambda_{\CC}^{1}T^*M)\}.
\]
A local Sobolev space $W^{1,2}_{\loc}(M)$ consists of distributions $v$ on $M$ such that $\psi v\in W^{1,2}(M)$, for all $\psi\in\mcomp$. The space of compactly supported elements of $W^{1,2}_{\loc}(M)$ will be indicated by $W^{1,2}_{\comp}(M)$.

\begin{remark}\label{R:regularity} Let $\vbn$ be a Hermitian vector bundle over $M$. The following observation will be used in the sequel: if $u\in\Gamma_{C^{\infty}}(\vbn)$ then $|u|\in C(M)\cap W^{1,2}_{\loc}(M)$,
where $C(M)$ stands for continuous functions on $M$ and $|u(x)|$ is the fiberwise norm in $\vbn_{x}$.
\end{remark}

We also need a sequence of cut-off functions:

\subsection{Cut-Off Functions}\label{SS:cut-off} On a geodesically complete Riemannian manifold $M$ without boundary, there exists (see theorem III.3(a) in~\cite{Gue-book}) a sequence of functions $\chi_{k}\in\mcomp$ with the following properties:

\begin{enumerate}

\item [(c1)] for all $x\in M$, we have $0\leq \chi_{k}(x)\leq 1$;

\item [(c2)] for all compact sets $G\subset M$, there exists $n_0(G)\in\mathbb{N}$ such that for all $k>n_0$, we have $\chi_k|_{G}\equiv 1$;

\item [(c3)] $\displaystyle\sup_{x\in M}|d\chi_{k}(x)|\leq \frac{C}{k}$, where $C>0$ is a constant independent of $k$, and $|\cdot|$ is the fiberwise norm in $T_{x}^*M$.
\end{enumerate}
\begin{remark} From the property (c2) it follows that $\displaystyle\lim_{k\to \infty}\chi_{k}(x)=1$, for all $x\in M$.
\end{remark}

We now state a key lemma whose parts (ii) and (iii), in the presence of a vector field $X$, extend lemmas 1 and 2 from~\cite{Vieira-16}.

\begin{lemma}\label{L:V-DRIFT} Assume that $M$ is a geodesically complete Riemannian manifold without boundary. Let $\rho\colon M\to\RR$ be a continuous function such that $\rho(x)\geq 0$ for all $x\in M$. Furthermore, let $X$ be a smooth, real vector field on $M$ such that
\begin{equation}\label{E:hyp-X-1}
|X(x)|\leq \hat{a}\sqrt{\rho(x)},
\end{equation}
for all $x\in M$, where $0\leq \hat{a}<1$ is a constant.

Assume that $h\colon M\to\RR$ is a function belonging to $C(M)\cap W^{1,2}_{\loc}(M)\cap L^2(M)$  and satisfying the distributional inequality
\begin{equation}\label{E:distr-vieira}
h\Del_{M}h\leq -(Xh)h -(\div X)h^2+a\rho h^2+bh^2,
\end{equation}
where $a\geq 0$ and $b\geq 0$ are constants. (Here, $\Del_{M}$ is the non-negative Laplacian acting on  functions, and the notation $Xh$ means $dh(X)$, the action of $dh$ on $X$.)

Then, the following hold:

\begin{itemize}
\item [(i)] If $h$ satisfies~(\ref{E:distr-vieira}) with $X\equiv 0$, then
\begin{equation}\label{E:distr-vieira-c1}
\int_{M}|dh(x)|^2\,d\nu_{g}(x)\leq a\int_{M}\rho(x) h^2(x)\,d\nu_{g}(x)+b\int_{M}h^2(x)\,d\nu_{g}(x).
\end{equation}

\item [(ii)] Assume, in addition, that the hypothesis (P1) is satisfied. Furthermore, assume that $0\leq a<1-\hat{a}$, with $\hat{a}$ as in~(\ref{E:hyp-X-1}). Then,
\begin{equation}\label{E:distr-vieira-c2}
\int_{M}|dh(x)|^2\,d\nu_{g}(x)\leq \frac{b}{1-a-\hat{a}}\int_{M}h^2(x)\,d\nu_{g}(x).
\end{equation}

\item [(iii)] Assume, in addition, that the hypothesis (P1) is satisfied. Furthermore, assume that  $0\leq a<1-\hat{a}$, with $\hat{a}$ as in~(\ref{E:hyp-X-1}). Assume also that $h$ is not identically equal to $0$ and that $h$ satisfies~(\ref{E:distr-vieira}) with $b=0$. Then, $M$ has finite volume and $\rho$ is identically equal to $0$.
\end{itemize}
\end{lemma}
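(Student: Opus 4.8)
The plan is to feed the nonnegative test functions $\chi_k^2$ (with $\{\chi_k\}$ the cut-offs of Section~\ref{SS:cut-off}) into the distributional inequality~(\ref{E:distr-vieira}), integrate by parts, control every cross term by Cauchy--Schwarz and Young's inequality, and then let $k\to\infty$. Since $h\in C(M)\cap W^{1,2}_{\loc}(M)$ and $\chi_k^2\in\mcomp$, the product $\chi_k^2 h$ lies in $W^{1,2}_{\comp}(M)$, so testing~(\ref{E:distr-vieira}) against $\chi_k^2$ gives $\int_M\langle dh,d(\chi_k^2 h)\rangle\,d\nu_g\le\int_M\chi_k^2[-(Xh)h-(\div X)h^2+a\rho h^2+bh^2]\,d\nu_g$. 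On the left, $d(\chi_k^2 h)=\chi_k^2\,dh+2\chi_k h\,d\chi_k$ together with the identity $|d(\chi_k h)|^2=\chi_k^2|dh|^2+2\chi_k h\langle dh,d\chi_k\rangle+h^2|d\chi_k|^2$ rewrites the integral as $\int_M|d(\chi_k h)|^2-\int_M h^2|d\chi_k|^2$. On the right, the divergence theorem applied to the compactly supported function $\chi_k^2 h^2$ gives $\int_M(\div X)\chi_k^2 h^2=-2\int_M h^2\chi_k(X\chi_k)-2\int_M\chi_k^2 h(Xh)$, so the two drift terms fuse into $\int_M\chi_k^2 h(Xh)+2\int_M h^2\chi_k(X\chi_k)$ --- and the possibly unbounded, sign-indefinite factor $\div X$ disappears. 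Combining, I arrive at the master inequality
\begin{equation}
\int_M|d(\chi_k h)|^2\ \le\ \int_M h^2|d\chi_k|^2+\int_M\chi_k^2 h(Xh)+2\int_M h^2\chi_k(X\chi_k)+a\int_M\chi_k^2\rho h^2+b\int_M\chi_k^2 h^2 ,\tag{$\ast$}
\end{equation}
where all integrals are taken with respect to $d\nu_g$.

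For part~(i) I set $X\equiv 0$ in $(\ast)$, leaving only the first, fourth and fifth terms; I may assume $\int_M\rho h^2<\infty$, else~(\ref{E:distr-vieira-c1}) is trivial. By property~(c3), $\int_M h^2|d\chi_k|^2\le(C^2/k^2)\|h\|_2^2\to 0$; since $0\le\chi_k\le 1$ and $\chi_k\to 1$ pointwise, dominated convergence gives $\int_M\chi_k^2\rho h^2\to\int_M\rho h^2$ and $\int_M\chi_k^2 h^2\to\|h\|_2^2$. Thus $\sup_k\int_M|d(\chi_k h)|^2<\infty$, and since $\chi_k h\to h$ in $L^2(M)$, weak lower semicontinuity of $v\mapsto\int_M|dv|^2$ yields $\int_M|dh|^2\le\liminf_k\int_M|d(\chi_k h)|^2$; passing to the limit in $(\ast)$ gives~(\ref{E:distr-vieira-c1}).

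For part~(ii) I additionally invoke hypothesis~(P1), which extends from $\mcomp$ to $W^{1,2}_{\comp}(M)$ by a routine approximation argument; applied to $\chi_k h$ it reads $\int_M\chi_k^2\rho h^2\le\int_M|d(\chi_k h)|^2$. The fourth term of $(\ast)$ is then at most $a\int_M|d(\chi_k h)|^2$. For the third term I use $|(Xh)h|\le\hat a\sqrt\rho\,|h|\,|dh|$ from~(\ref{E:hyp-X-1}) and the triangle inequality $\chi_k|dh|\le|d(\chi_k h)|+|h|\,|d\chi_k|$: after Cauchy--Schwarz and one more application of the extended~(P1) it is bounded by $\hat a\int_M|d(\chi_k h)|^2$ plus an error of the form $\varepsilon_k(\int_M|d(\chi_k h)|^2+\|h\|_2^2)$; the two cut-off terms are controlled the same way using $|X\chi_k|\le\hat a\sqrt\rho\,(C/k)$ and~(c3). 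This yields $(1-a-\hat a-\varepsilon_k)\int_M|d(\chi_k h)|^2\le\varepsilon_k\|h\|_2^2+b\int_M\chi_k^2 h^2$ with $\varepsilon_k\to 0$. Since $a+\hat a<1$, for large $k$ the left coefficient exceeds $(1-a-\hat a)/2>0$, so $\int_M|d(\chi_k h)|^2$ is bounded in $k$; hence, as in part~(i), $\int_M|dh|^2\le\liminf_k\int_M|d(\chi_k h)|^2$, while taking $k\to\infty$ in the last inequality gives $\limsup_k(1-a-\hat a)\int_M|d(\chi_k h)|^2\le b\|h\|_2^2$, and combining the two proves~(\ref{E:distr-vieira-c2}). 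I expect this absorption step to be the main obstacle: the Cauchy--Schwarz/Young estimates must be arranged so that the coefficient of $\int_M|d(\chi_k h)|^2$ on the right is exactly $a+\hat a$ up to a vanishing error (which is what makes $a+\hat a<1$ precisely the sharp hypothesis), and one is forced to work with the truncation $\chi_k h$, whose Dirichlet integral is a priori finite, rather than with $h$ directly.

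For part~(iii), I apply part~(ii) with $b=0$: then $\int_M|dh|^2\le 0$, so $dh=0$ almost everywhere, and since $h\in C(M)\cap W^{1,2}_{\loc}(M)$ and $M$ is connected, $h$ equals a constant $c$, which is nonzero by hypothesis. Consequently $\|h\|_2^2=c^2\,\textrm{vol}(M)<\infty$, so $M$ has finite volume; and applying~(P1) to $f=\chi_k\in\mcomp$ gives $\int_M\rho\,\chi_k^2\le\int_M|d\chi_k|^2\le(C^2/k^2)\,\textrm{vol}(M)\to 0$, whence $\int_M\rho=0$ by Fatou's lemma, and therefore $\rho\equiv 0$ since $\rho$ is continuous and nonnegative.
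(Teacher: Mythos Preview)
Your proof is correct and follows essentially the same integration-by-parts/cut-off scheme as the paper. The one organizational difference worth noting is that you keep $\int_M|d(\chi_k h)|^2$ on the left throughout and appeal to weak lower semicontinuity of the Dirichlet energy at the end, whereas the paper works instead with $\int_M\chi_k^2|dh|^2$, introduces a fixed Young-inequality parameter $\varepsilon>0$, and performs the double limit $k\to\infty$ followed by $\varepsilon\to 0$; your packaging makes the application of (P1) to $\chi_k h$ a bit more transparent and avoids the auxiliary $\varepsilon$, but the two arguments are otherwise the same.
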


\begin{proof} The assertion (i) was proved in lemma 2.4 in~\cite{Zhou-20}. We remark that in lemma 2.4 of~\cite{Zhou-20} the author assumes $h\in C^{\infty}(M)$ and
\[
\int_{B(x_0,R)}h^2d\nu_{g}=o(R^2),
\]
as $R\to \infty$, where $B(x_0,R)$ is the geodesic open ball centered at $x_0\in M$ with radius $R$.

An inspection of the arguments used in the quoted lemma reveals that they work without changes under the hypothesis $h\in C(M)\cap L^2(M)\cap W^{1,2}_{\loc}(M)$.

We now prove the assertion (ii). As in lemmas 1 and 2 of~\cite{Vieira-16}, we use the integration-by-parts method, modified to account for the presence of the vector field $X$. Using the cut-off functions $\{\chi_k\}$ from section~\ref{SS:cut-off}, we multiply both sides of~(\ref{E:distr-vieira}) by $\chi_k^2$ and integrate each term over $M$.

In particular, remembering that the scalar Laplacian $\Delta_{M}w=d^{\dagger}dw$ is a non-negative operator and performing integration by parts on the left hand side of~(\ref{E:distr-vieira}) we have, after using the product rule on $d(\chi_k^2h)$,
\begin{equation}\label{E:dv-1}
\int_{M}|dh|^2\chi_k^2\,d\nu_{g}+2\int_{M}\langle hd\chi_k,\chi_kdh\rangle\,d\nu_{g}
\end{equation}
where $\langle\cdot,\cdot\rangle$ is the fiberwise inner product in $T_{x}^*M$.

Furthermore, performing integration by parts in the term with integrand $-(Xh)h\chi_k^2$ on the right hand side of~(\ref{E:distr-vieira}) and using the formula (see proposition 1.4 in appendix C of~\cite{Taylor})
\[
X^{\dagger}w=-Xw-(\div X)w,
\]
where $X^{\dagger}$ is the formal adjoint of the action of $X$ on a function $w$,
the right hand side of~(\ref{E:distr-vieira}) becomes
\begin{align}\label{E:dv-2}
&\int_{M}[X(\chi_k^2h)]h\,d\nu_{g}+\int_{M}(\div X)\chi_k^2h^2\,d\nu_{g}-\int_{M}(\div X)\chi_k^2h^2\,d\nu_{g}\nonumber\\
&+a\int_{M}\rho (\chi_kh)^2\,d\nu_{g}+b\int_{M} (\chi_kh)^2\,d\nu_{g}\nonumber\\
&=2\int_{M}(X\chi_k)\chi_kh^2\,d\nu_{g}+\int_{M}(Xh)\chi_k^2h\,d\nu_{g}\nonumber\\
&+a\int_{M}\rho (\chi_kh)^2\,d\nu_{g}+b\int_{M} (\chi_kh)^2\,d\nu_{g},
\end{align}
where we used the product rule
\[
X(\chi_k^2h)=2\chi_k(X\chi_k)h+\chi_k^2Xh.
\]
Remembering that~(\ref{E:dv-1}) is less than or equal to~(\ref{E:dv-2}), we obtain after some rearranging
\begin{align}\label{E:dv-3}
&\int_{M}|dh|^2\chi_k^2\,d\nu_{g}\leq -2\int_{M}\langle hd\chi_k,\chi_kdh\rangle\,d\nu_{g}\nonumber\\
&+2\int_{M}(X\chi_k)\chi_kh^2\,d\nu_{g}+\int_{M}(Xh)\chi_k^2h\,d\nu_{g}+a\int_{M}\rho (\chi_kh)^2\,d\nu_{g}\nonumber\\
&+b\int_{M} (\chi_kh)^2\,d\nu_{g}.
\end{align}

Our next goal is to use the hypotheses of part (ii) of the lemma to estimate (from above) the terms on the right hand side of~(\ref{E:dv-3}), and if, as a result of those estimates, we get terms with integrand $|dh|^2\chi_k^2$, make sure that those terms have as small coefficients as possible (with a total sum less than 1), so that after transferring those terms to the left hand side we a get a positive coefficient in front of the integral of $|dh|^2\chi_k^2$.

Before doing this we record a useful inequality for (real) numbers $\alpha$, $\beta$, and $\varepsilon>0$:
\begin{equation}\label{E:binomial}
\alpha\beta\leq\frac{\ve \alpha^2}{2}+\frac{\beta^2}{2\ve}.
\end{equation}

Using~(\ref{E:binomial}) and the (fiberwise) inequality (for one-forms $\omega$ and $\eta$)
\[
|\langle\omega,\eta\rangle|\leq |\omega||\eta|,
\]
we estimate the first term on the right hand side of~(\ref{E:dv-3}) as
\begin{align}\label{E:dv-4}
&-2\int_{M}\langle hd\chi_k,\chi_kdh\rangle\,d\nu_{g}\nonumber\\
&\leq \ve \int_{M}|dh|^2\chi_k^2\,d\nu_{g}+\ve^{-1}\int_{M}|d\chi_k|^2h^2\,d\nu_{g}.
\end{align}
Using the hypothesis~(\ref{E:hyp-X-1}), the inequality (here, $f$ is a function)
\[
|Xf|\leq |X||df|,
\]
and~(\ref{E:binomial}), we estimate the second term on the right hand side of~(\ref{E:dv-3}) as
\begin{align}\label{E:dv-5}
&2\int_{M}[X(\chi_k)]\chi_kh^2\,d\nu_{g}\leq 2\hat{a}\int_{M}\sqrt{\rho}|d\chi_k|\chi_kh^2\nonumber\\
&\leq \hat{a}\ve \int_{M}\rho(\chi_kh)^2\,d\nu_{g}+\hat{a}\ve^{-1}\int_{M}|d\chi_k|^2h^2\,d\nu_{g}.
\end{align}

Using the hypothesis~(\ref{E:hyp-X-1}) and the inequality~(\ref{E:binomial}) with $\ve=1$, we estimate the third term on the right hand side of~(\ref{E:dv-3}) as
\begin{align}\label{E:dv-6}
&\int_{M}(Xh)\chi_k^2h\,d\nu_{g} \leq \hat{a}\int_{M}\sqrt{\rho}|dh|\chi_k^2hd\nu_{g}\nonumber\\
&\leq \frac{\hat{a}}{2}\int_{M}\rho(\chi_kh)^2\,d\nu_{g}+\frac{\hat{a}}{2}\int_{M}\chi_k^2|dh|^2\,d\nu_{g}.
\end{align}

We keep the fourth and the fifth term on the right hand side of~(\ref{E:dv-3}) in their present form.

We now look at the right hand side of~(\ref{E:dv-3}) and the estimates~(\ref{E:dv-5}) and~(\ref{E:dv-6}). Adding the coefficients of the terms with integrand $\rho(\chi_kh)^2$, we get
\begin{equation}\label{E:coeff-temp}
\hat{a}\vep+2^{-1}\hat{a}+a.
\end{equation}

As $h\in W^{1,2}_{\loc}(M)$ and $\chi_k\in\mcomp$, we have $(\chi_k h)\in W^{1,2}_{\comp}(M)$. Thus, using Friedrichs mollifiers (with the help of a finite partition of unity, we may assume that $\chi_k h$ is supported in a coordinate neighborhood) and the hypothesis~(\ref{E:poincare-1}), it follows that
\begin{equation*}
\int_{M}\rho|\chi_k h|^2\,d\nu_{g}\leq \int_{M}|d(\chi_k h)|^2d\nu_{g}.
\end{equation*}

The latter inequality, together with the estimate,
\begin{align*}
&|d(\chi_k h)|^2=|\chi_kdh+hd\chi_k|^2\leq |\chi_kdh|^2+2|hd\chi_k||\chi_kdh|+|hd\chi_k|^2\nonumber\\
&\leq (1+\ve)\chi_k^2|dh|^2+(1+\ve^{-1})|d\chi_k|^2h^2,
\end{align*}
where we used~(\ref{E:binomial}), yield
\begin{equation}\label{E:poincare-tmp}
\int_{M}\rho|\chi_k h|^2\,d\nu_{g}\leq (1+\ve)\int_{M}\chi_k^2|dh|^2\nu_{g}+(1+\ve^{-1})\int_{M}|d\chi_k|^2h^2\,d\nu_{g}.
\end{equation}

We now go back to~(\ref{E:dv-3}), refer to the estimates~(\ref{E:dv-4}),~(\ref{E:dv-5}),~(\ref{E:dv-6}) and~(\ref{E:poincare-tmp}), remembering the coefficient~(\ref{E:coeff-temp}) in front of the sum of the terms with integrand $\rho(\chi_kh)^2$. As a result, we obtain, after moving all terms with integrand $\chi_k^2|dh|^2$ to the left hand side,
\begin{align}\label{E:dv-final}
&[1-\ve-2^{-1}\hat{a}-(\hat{a}\vep+2^{-1}\hat{a}+a)(1+\ve)]\int_{M}|dh|^2\chi_k^2\,d\nu_{g}\nonumber\\
&\leq[(\hat{a}\vep+2^{-1}\hat{a}+a)(1+\ve^{-1})+(\hat{a}+1)\ve^{-1}]\int_{M}|d\chi_k|^2h^2\,d\nu_{g}+b\int_{M} (\chi_kh)^2\,d\nu_{g}.
\end{align}

Since $0\leq a<1-\hat{a}$, we can choose a small enough $\ve>0$ so that
\begin{equation}\label{E:pos-coeff}
1-\ve-2^{-1}\hat{a}-(\hat{a}\vep+2^{-1}\hat{a}+a)(1+\ve)>0.
\end{equation}
Letting $k\to \infty$ in~(\ref{E:dv-final}) and using the properties of $\chi_k$ from section~\ref{SS:cut-off}, together with the assumption $h\in L^2(M)$, we get
\begin{align}\label{E:dv-final-1}
&[1-\ve-2^{-1}\hat{a}-(\hat{a}\vep+2^{-1}\hat{a}+a)(1+\ve)]\int_{M}|dh|^2\,d\nu_{g}\nonumber\\
&\leq b\int_{M} h^2\,d\nu_{g}.
\end{align}
Finally, letting $\ve\to 0$, we obtain~(\ref{E:distr-vieira-c2}).

We now prove the assertion (iii). With~(\ref{E:dv-final-1}) at our disposal, we can repeat the argument from the end of the proof of lemma 1 in~\cite{Vieira-16}. Putting $b=0$ in~(\ref{E:dv-final-1}) and keeping in mind~(\ref{E:pos-coeff}), we get
\[
\int_{M}|dh|^2\,d\nu_{g}\leq 0.
\]
This shows that there exists $\tilde{c}\in \RR$ such that $h(x)=\tilde{c}$ for all $x\in M$. By assumption in part (iii) of the lemma we have $\tilde{c}\neq 0$ and $h\in L^2(M)$. The only way the last sentence can be true is that $\textrm{vol}(M)$ be finite. Furthermore, using~(\ref{E:poincare-1}) with  $f=\chi_k$, we have
\[
\int_{M}\rho\chi_k^2\,d\nu_{g}\leq \int_{M}|d\chi_k|^2\,d\nu_{g}.
\]
Letting $k\to\infty$ in the latter inequality and using the properties of $\chi_k$ from section~\ref{SS:cut-off}, we obtain (remembering that $\textrm{vol}(M)$ is finite)
\[
\int_{M}\rho\,d\nu_{g}\leq 0,
\]
which, together with the hypothesis $\rho(x)\geq 0$, tells us that $\rho(x)=0$ for all $x\in M$.
This concludes the proof of assertion (iii) of the lemma.
\end{proof}

\subsection{Bochner Formula}
Here we recall a Bochner-type formula in the setting of a Hermitian vector bundle $\vbn$ over $M$ and a metric covariant derivative $\nabla$ on $\vbn$. For $u\in\Gamma_{C^{\infty}}(\vbn)$ we have
\begin{equation}\label{E:bochner-1}
\Delta_{M}\left(\frac{|u|^2}{2}\right)=\RE\langle\nabla^{\dagger}\nabla u,u\rangle-|\nabla u|^2,
\end{equation}
where $\Del_{M}$ is the non-negative Laplacian (acting on functions),
$\langle\cdot,\cdot\rangle$ is the fibrewise
inner product in $\vbn_x$, $|\cdot|$ on the left hand side is the norm in $\vbn_x$, and $|\cdot|$ on the right hand side is the norm in $(T^*M\otimes \vbn)_x$.

Using the formula
\begin{equation}\label{E:chain-rule-lap}
\Delta_{M}(f \circ w)= -f''(w)|dw|^2 +f'(w)\Delta_{M} w
\end{equation}
with real-valued functions $w\in W_{\loc}^{1,2}(M)$ and $f\in C^{\infty}(\RR)$,
we rewrite~(\ref{E:bochner-1}) as
\begin{equation}\label{E:bochner-2}
|u|\Delta_{M}|u|-|d|u||^2=\RE\langle\nabla^{\dagger}\nabla u,u\rangle-|\nabla u|^2.
\end{equation}

\subsection{Proof of Theorem~\ref{T:main-1}}

Starting with $u\in\mathscr{K}_{H_{X,V}}$, that is,
\[
\nabla^{\dagger}\nabla u=-\nabla_{X}u-Vu,
\]
and using~(\ref{E:bochner-2}), we obtain
\begin{align}\label{E:thm-1}
&|u|\Delta_{M}|u|=-\RE\langle \nabla_{X}u, u\rangle-\langle Vu, u\rangle\nonumber\\
&+|d|u||^2-|\nabla u|^2.
\end{align}
Keeping in mind that $X$ is real  and using the property
\begin{equation}\label{E:re-part}
X(|u|^2)=X\langle u, u\rangle=\langle \nabla_{X}u,u\rangle + \langle u,\nabla_{X}u\rangle=2\RE \langle \nabla_{X}u,u\rangle,
\end{equation}
together with the chain rule (here $Xf$ means $df(X)$),
\begin{equation}\label{E:re-part-chain}
X(|u|^2)=2|u|(X|u|),
\end{equation}
we can rewrite~(\ref{E:thm-1}) as
\begin{align}\label{E:thm-2}
&|u|\Delta_{M}|u|=-(X|u|)|u|-\langle Vu, u\rangle\nonumber\\
&+|d|u||^2-|\nabla u|^2.
\end{align}
Using the hypothesis~(\ref{E:v-rho}) and the so-called Kato's inequality (see formula (1.32) in~\cite{Eich-88} or formula (1) in~\cite{MH-00})
\begin{equation}\label{E:kato-fiber}
|d|u(x)||\leq |(\nabla u)(x)|,
\end{equation}
the formula~(\ref{E:thm-2}) leads to
\begin{equation*}
|u|\Delta_{M}|u|\leq -(X|u|)|u|-(\div X)|u|^2+a\rho|u|^2.
\end{equation*}

The last inequality and remark~\ref{R:regularity} tell us that the function $h(x):=|u(x)|$ satisfies the hypotheses of part (iii) of lemma~\ref{L:V-DRIFT}. Hence, looking at the conditions (m1) and (m2) of theorem~\ref{T:main-1}, we infer that $|u(x)|=0$ for all $x\in M$. This shows that $u=0$, that is, $\mathscr{K}_{H_{X,V}}=\{0\}$. $\hfill\square$

\subsection{Proof of Theorem~\ref{T:main-2}}

Starting with $u\in\mathscr{K}_{H_{X,V}}$ and arguing as in the proof of theorem~\ref{T:main-1}, we obtain~(\ref{E:thm-2}). The latter formula, together with the inequality~(\ref{E:kato-fiber}) and the hypothesis~(\ref{E:v-rho-a-b}), lead to
\begin{equation*}
|u|\Delta_{M}|u|\leq -(X|u|)|u|-(\div X)|u|^2+a\rho|u|^2+b|u|^2.
\end{equation*}
Referring to remark~\ref{R:regularity}, the last inequality tells us that the function $h(x):=|u(x)|$ satisfies the hypotheses of part (ii) of lemma~\ref{L:V-DRIFT}. Therefore, by~(\ref{E:distr-vieira-c2}) we have
\begin{equation}\label{E:dv-thm-2}
\int_{M}|d|u||^2\,d\nu_{g}\leq \frac{b}{1-a-\hat{a}}\int_{M}|u|^2\,d\nu_{g}.
\end{equation}

In particular, this estimate tells us $|u|\in W^{1,2}(M)$. As $M$ is geodesically complete, the space $\mcomp$ is dense in $W^{1,2}(M)$; see theorem 3.1 in~\cite{hebey}. Thus, the inequality~(\ref{E:first-ev-ineq}) holds with $|u|$ in place of $f$. Consequently, combining ~(\ref{E:first-ev-ineq}) and~(\ref{E:dv-thm-2}) yields
\begin{equation*}
\lambda_1(M)\int_{M}|u|^2\,d\nu_{g}\leq \frac{b}{1-a-\hat{a}}\int_{M}|u|^2\,d\nu_{g},
\end{equation*}
which upon rearranging leads to
\[
\left(\lambda_1(M)-\frac{b}{1-a-\hat{a}}\right)\int_{M}|u|^2\,d\nu_{g}\leq 0.
\]
The latter inequality and the hypothesis~(\ref{E:f-eig-ab}) lead to $|u(x)|=0$ for all $x\in M$, that is, $\mathscr{K}_{H_{X,V}}=\{0\}$. $\hfill\square$

\section{Proofs of Theorems~\ref{T:main-3} and~\ref{T:main-4}}~\label{S:pf-3-4}
In this section we work in the context of a Clifford module $\vbn$ over $M$, equipped with a Clifford connection $\nabla$ and the associated Dirac operator $D$. We first recall the product rule for $D$.

\subsection{Product Rule} By lemma II.5.5 in~\cite{lm-spin-book} (or exercise 10.1.1 in~\cite{Taylor}), for all $u\in \Gamma_{C^{\infty}}(\vbn)$  and  all $\psi\in C^{\infty}(M)$, we have
\begin{equation}\label{E:ml-product}
D(\psi u)=i(d\psi)\bullet u+\psi Du,
\end{equation}
where ``$\bullet$" is the Clifford multiplication and $i=\sqrt{-1}$.

The following lemma is a Dirac-operator analogue of lemma 2.2 from~\cite{Zhou-20}.

\begin{lemma}\label{L:zhou-lemma} Assume that $M$ is a geodesically complete Riemannian manifold without boundary.
Let $\vbn$ be a Clifford module over $M$ equipped with a Clifford connection $\nabla$, and let $D$ be the associated Dirac operator. Assume that $D$ satisfies the hypothesis (P2). Furthermore, assume that $u\in \Gamma_{C^{\infty}}(\vbn)\cap \Gamma_{L^2}(\vbn)$ is a solution of the equation $Du=0$. Then,
\begin{equation}\label{E:zhou-lemma-1}
\int_{M}\rho|u|^2\,d\nu_{g}\leq 0.
\end{equation}
\end{lemma}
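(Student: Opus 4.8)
The goal is to show that a square-integrable solution $u$ of $Du=0$ satisfies $\int_M\rho|u|^2\,d\nu_g\le 0$.  The natural approach is to test the equation $Du=0$ against $\chi_k^2 u$, using the cut-off functions $\{\chi_k\}$ from section~\ref{SS:cut-off}, and then use the product rule~(\ref{E:ml-product}) together with the weighted Poincar\'e inequality~(\ref{E:poincare-2}).  First I would observe that since $u\in\Gamma_{C^\infty}(\vbn)$ and $\chi_k\in\mcomp$, the section $\chi_k u$ lies in $\vcomp$, so~(\ref{E:poincare-2}) applies to it:
\begin{equation*}
\int_M\rho\,\chi_k^2|u|^2\,d\nu_g\le\int_M|D(\chi_k u)|^2\,d\nu_g.
\end{equation*}
By the product rule~(\ref{E:ml-product}), $D(\chi_k u)=i(d\chi_k)\bullet u+\chi_k Du=i(d\chi_k)\bullet u$, since $Du=0$.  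Because the Clifford action by a unit covector is a (fiberwise) isometry up to the normalization in hypothesis~(i) of section~\ref{SS:cb}, one has $|(d\chi_k)\bullet u|=|d\chi_k|\,|u|$ pointwise (for $d\chi_k\ne0$; the identity is trivially true where $d\chi_k=0$).  Hence
\begin{equation*}
\int_M\rho\,\chi_k^2|u|^2\,d\nu_g\le\int_M|d\chi_k|^2|u|^2\,d\nu_g.
\end{equation*}

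**Passing to the limit.**  Next I would let $k\to\infty$.  On the right-hand side, property~(c3) gives $|d\chi_k|\le C/k$, so $\int_M|d\chi_k|^2|u|^2\,d\nu_g\le (C^2/k^2)\|u\|_2^2\to0$ since $u\in\Gamma_{L^2}(\vbn)$.  On the left-hand side, properties~(c1) and~(c2) give $0\le\chi_k\le1$ and $\chi_k(x)\to1$ for every $x$; since $\rho\ge0$ by~(P2a), the integrands $\rho\chi_k^2|u|^2$ are nonnegative and increase to $\rho|u|^2$, so by the monotone convergence theorem $\int_M\rho\chi_k^2|u|^2\,d\nu_g\to\int_M\rho|u|^2\,d\nu_g$.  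Combining, we obtain $\int_M\rho|u|^2\,d\nu_g\le0$, which is~(\ref{E:zhou-lemma-1}).

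**The main obstacle.**  The one point that needs a little care is the pointwise identity $|(d\chi_k)\bullet u|=|d\chi_k|\,|u|$.  The compatibility condition in~(i) of section~\ref{SS:cb} is stated only for unit covectors $\xi$, so one should first write $d\chi_k=|d\chi_k|\,\xi$ with $|\xi|=1$ (at points where $d\chi_k\ne0$), apply $\langle\xi\bullet u,\xi\bullet u\rangle=\langle u,\xi\bullet\xi\bullet u\rangle$, and use the Clifford relation $\xi\bullet\xi=-|\xi|^2=-1$ (with the sign convention from section~\ref{SS:cb}) to get $\langle\xi\bullet u,\xi\bullet u\rangle=\langle u,u\rangle$, hence $|(d\chi_k)\bullet u|^2=|d\chi_k|^2|u|^2$.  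Everything else — the cut-off construction, the integrability of $u$ and the monotone/dominated convergence — is standard, so this is a short argument once that identity is in place.
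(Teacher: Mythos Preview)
Your argument is correct and follows essentially the same route as the paper: apply~(\ref{E:poincare-2}) to $\chi_k u$, use the product rule~(\ref{E:ml-product}) together with $Du=0$, bound the right-hand side by $(C/k)^2\|u\|_2^2$, and let $k\to\infty$. Two small slips to fix: first, under the paper's (Taylor's) conventions the Clifford action by a unit covector is \emph{self-adjoint} and satisfies $\xi\bullet\xi=+1$, not $-1$ (with your sign the computation would give $\langle\xi\bullet u,\xi\bullet u\rangle=-|u|^2$); the conclusion $|\xi\bullet u|=|u|$ is nevertheless correct. Second, the cut-offs $\chi_k$ from section~\ref{SS:cut-off} are not asserted to be monotone in $k$, so monotone convergence is not directly available; Fatou's lemma (the integrands are nonnegative and converge pointwise) gives $\int_M\rho|u|^2\,d\nu_g\le\liminf_k\int_M\rho\chi_k^2|u|^2\,d\nu_g\le 0$, which is all you need.
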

\begin{proof} Let $\{\chi_k\}$ be as in section~\ref{SS:cut-off} and let $u\in \Gamma_{C^{\infty}}(\vbn)$. Then, $\chi_k u\in\vcomp$, and we can use~(\ref{E:poincare-2}) to get
\begin{align}\label{E:zhou-w-1}
&\int_{M}\rho\chi_k^2|u|^2\,d\nu_{g}\leq \|D(\chi_ku)\|_{2}^2\nonumber\\
&=\|i(d\chi_k)\bullet u+\chi_k Du\|_{2}^2=\|i(d\chi_k)\bullet u\|_{2}^2\leq \frac{C}{k^2}\|u\|_{2}^2,
\end{align}
where $C>0$ is a constant and $\|\cdot\|_{2}$ is the norm in $\Gamma_{L^2}(\vbn)$. Here, in the first equality we used~(\ref{E:ml-product}), in the second equality we used the assumption $Du=0$, and in the third inequality we used the property (c3) from section~\ref{SS:cut-off}.
Letting $k\to\infty$ in~(\ref{E:zhou-w-1}) we obtain~(\ref{E:zhou-lemma-1}).
\end{proof}

Our last ingredient is a geometric formula.

\subsection{Bochner-Weitzenb\"ock Formula} Combining~(\ref{E:bochner-2}) and~(\ref{E:Weitzenbock}) we get
\begin{align}\label{E:bochner-3}
&|u|\Delta_{M}|u|-|d|u||^2=\RE\langle D^2 u,u\rangle-\RE\langle\mathscr{R}^{W}u,u\rangle-|\nabla u|^2\nonumber\\
&=\RE\langle D^2 u,u\rangle-\langle\mathscr{R}^{W}u,u\rangle-|\nabla u|^2,
\end{align}
where in the second equality we used fiberwise self-adjointness of  $\mathscr{R}^{W}$.

We are now ready to prove theorem~\ref{T:main-3}.

\subsection{Proof of Theorem~\ref{T:main-3}}

Starting with $u\in\mathscr{K}_{D^2}$ and using~(\ref{E:bochner-3}), we get
\begin{align}\label{E:thm-3}
&|u|\Delta_{M}|u|=-\langle \mathscr{R}^{W} u, u\rangle+|d|u||^2-|\nabla u|^2\nonumber\\
&\leq a\rho|u|^2,
\end{align}
where the last estimate (with $a\geq 0$) follows from the hypothesis~(\ref{E:v-rho-new}) and the inequality~(\ref{E:kato-fiber}).

Looking at the inequality~(\ref{E:thm-3}) and recalling remark~\ref{R:regularity} we can see that the function $h(x):=|u(x)|$ satisfies the hypotheses of part (i) of lemma~\ref{L:V-DRIFT} with $b=0$. Therefore, appealing to~(\ref{E:distr-vieira-c1}) with $b=0$, we get
\begin{equation*}
\int_{M}|d|u||^2\,d\nu_{g}\leq a\int_{M}\rho |u|^2\,d\nu_{g},
\end{equation*}
which in combination with lemma~\ref{L:zhou-lemma} (remember~(\ref{E:KER-D-squared-1}), that is, $Du=0$) yields
\begin{equation*}
\int_{M}|d|u||^2\,d\nu_{g}\leq a\int_{M}\rho |u|^2\,d\nu_{g}\leq 0.
\end{equation*}
Thus, there exists a number $\tilde{c}\geq 0$ such that $|u(x)|=\tilde{c}$ for all $x\in M$. Assume for a moment that $\tilde{c}>0$.

Since (see hypothesis (P2a)) the function $\rho$ is not identically equal to $0$, we have
\[
\int_{M}\rho\,d\nu_{g}>0.
\]
On the other hand~(\ref{E:zhou-lemma-1}) yields
\[
0\geq \int_{M}\rho|u|^2\,d\nu_{g}=\tilde{c}\int_{M}\rho\,d\nu_{g},
\]
that is (since we assumed $\tilde{c}>0$),
\[
\int_{M}\rho\,d\nu_{g}\leq 0.
\]
The obtained contradiction says that $\tilde{c}$ must equal $0$, that is, $\mathscr{K}_{D^2}=\{0\}$. $\hfill\square$

\subsection{Proof of Theorem~\ref{T:main-4}}

Starting with $u\in\mathscr{K}_{D^2}$ and using~(\ref{E:bochner-3}), we get
\begin{align}\label{E:thm-4}
&|u|\Delta_{M}|u|=-\langle \mathscr{R}^{W} u, u\rangle+|d|u||^2-|\nabla u|^2\nonumber\\
&\leq a\rho|u|^2+b|u|^2,
\end{align}
where the last estimate (with $a\geq 0$) follows from the hypothesis~(\ref{E:v-rho-a-b-new}) and the inequality~(\ref{E:kato-fiber}).

The estimate~(\ref{E:thm-4}) and remark~\ref{R:regularity} tell us that the function $h(x):=|u(x)|$ satisfies the hypotheses of part (i) of lemma~\ref{L:V-DRIFT}. Therefore~(\ref{E:distr-vieira-c1}) gives
\begin{equation*}
\int_{M}|d|u||^2\,d\nu_{g}\leq a\int_{M}\rho |u|^2\,d\nu_{g}+b\int_{M}|u|^2\,d\nu_{g}.
\end{equation*}
Remembering~(\ref{E:KER-D-squared-1}), that is, $Du=0$, and using lemma~\ref{L:zhou-lemma}, the last estimate leads to
\begin{equation*}
\int_{M}|d|u||^2\,d\nu_{g}\leq b\int_{M}|u|^2\,d\nu_{g}.
\end{equation*}
From hereon, we use~(\ref{E:f-eig-ab-new}) and argue in the same way as in the last stage of the proof of theorem~\ref{T:main-2} to infer that $\mathscr{K}_{D^2}=\{0\}$. $\hfill\square$

\end{document}